\newcommand{\family}[1]{\mathcal{#1}}    % general family
\newcommand{\partialG}{\family{A}}       % family of 2-digraphs
\newcommand{\fullG}{\family{B}}          % family of 2-dds
\newtheorem{theorem}{Theorem}
\newtheorem{lemma}{Lemma}
\newtheorem{prop}{Proposition}
\newcounter{enumTemp}
\begin{document}
\begin{flushleft}
\pagestyle{fancy}

\begin{center}
\huge
Non-Hamiltonian 2-regular Digraphs \\
\large
Munagala V. S. Ramanath\\
\end{center}

\begin{abstract}
  In earlier papers, we showed a decomposition of 2-diregular digraphs ({\sl 2-dds}) and
  used it to provide some sufficient conditions for these graphs to be non-Hamiltonian;
  we also showed a close connection between the permanent and determinant of the
  adjacency matrices of these digraphs and gave some enumeration and generation results.
  In the present paper we extend the discussion
  to a larger class of digraphs, introduce the notions of routes and quotients and use
  them to provide additional criteria for 2-dds to be non-Hamiltonian. Though individual
  non-Hamiltonian regular connected graphs of low degree are known (e.g. Tutte and
  Meredith graphs), families of such graphs are not common in the literature; even
  scarcer are families of such digraphs. Our results  identify a few such families.
\end{abstract}

\section{Introduction}
We explore a class of 2-regular digraphs by providing some necessary and some sufficient
conditions for non-Hamiltonicity by building on the results and decomposition of the
arc-set into {\em alternating cycles} outlined in [\ref{ref-ram}].

\section{Definitions}
\label{sec-def}
Let $G = (V,A)$ be a digraph. As usual, we say that a subgraph $H$ is a {\em component}
if it is a connected component of the underlying (undirected) graph. We use $c(G)$ to
denote the number of components of $G$.
For an arc $e = (u, v)$ of $G$, $u$ the {\it start-vertex} and $v$ the {\it end-vertex}
of $e$; $u$ is a {\em predecessor} of $v$, $v$ is a {\em successor} of $u$, $e$ is an
\emph{in-arc} of $v$ and \emph{out-arc} of $u$.

For a subset $U \subset V$, we use $U_{in}$ and $U_{out}$ for the set of arcs entering
[exiting] some vertex of $U$ from [to] $V - U$. When $U = \{v\}$, we write $v_{in}$ and
$v_{out}$.

$G$ is called a {\it k-digraph} iff for every vertex $v$ we have:
\begin{align*}
  |v_{in}| &= k \; {\rm and} \; |v_{out}| = 0; \; {\rm or} \\
  |v_{in}| &= 0 \; {\rm and} \; |v_{out}| = k; \; {\rm or} \\
  |v_{in}| &= |v_{out}| = k
\end{align*}

Suppose $F$ is a 1-digraph. Clearly, it must be a (disjoint) collection of components
each of which is either a simple cycle or a simple non-cyclic path; let $F_c$ and $F_p$
denote, respectively, the sets of these components. The {\em index} of $F$ denoted by
$i(F)$ is the number of cycles in it, i.e. $|F_c|$. We call $F$ {\it open} if
$i(F) = 0$, {\it closed} otherwise. If $F_p = \emptyset$, $F$ can be viewed as a
permutation and, in this case, $F$ is {\em even} or {\em odd} according as the
permutation is even or odd. As noted in Lemma 1 of [\ref{ref-ram}] $F$ is even iff
$|V|$ and $i(F)$ have the same parity.

Suppose $G$ is a k-digraph. A 1-digraph that is a spanning subgraph of $G$ will be called
a {\em factor} of $G$ ({\em difactor} is more appropriate but since all graphs in
this document are directed, we use the shorter term for brevity).

$V$ can be partitioned into three disjoint subsets, based on the in- and out-degree at
each vertex, called respectively the set of {\it entry}, {\it exit} and {\it saturated}
vertices:
\begin{align*}
  V_{entry} &= \{ v \in V \; | \; |v_{in}|  = 0, |v_{out}| = k \}\\
  V_{exit}  &= \{ v \in V \; | \; |v_{in}|  = k, |v_{out}| = 0 \}\\
  V_{sat}   &= \{ v \in V \; | \; |v_{in}| = |v_{out}| = k \}
\end{align*}

The entry and exit vertices are {\it unsaturated} vertices.
$G$ is {\em closed} if {\em all} of its factors are {\em closed}, {\em open} otherwise.
A Hamiltonian circuit of $G$ is clearly a (closed) factor of $G$.

A 1-digraph $F$ where $V_{entry} \neq \emptyset$ defines a unique bijection $r_F$ from
$V_{entry}$ to $V_{exit}$ where $r_F(u)$ is the unique exit vertex of $F$ where the simple
path beginning at $u$ ends. This bijection will be called the {\em route} defined by
$F$; it is {\em open} or {\em closed} according as $F$ is open or closed. Notice that
the route is defined based solely on the extremities of the paths of $F_p$ and ignores
the internal vertices of those paths as well as all of $F_c$. So distinct factors can
yield the same route.

A k-digraph is called a {\em k-diregular digraph} ($k$-dd) if all of its vertices
are saturated; it is sometimes convenient in this case to say that $G$ is
{\em saturated}.
Clearly, a component of a k-digraph must also be a k-digraph (and {\em may} be a 2-dd);
likewise, a component of a k-dd must also be a k-dd.

We use $\partialG$ and $\fullG$ to denote the families of 2-digraphs
and 2-dds respectively. Clearly we have $\fullG \subset \partialG$.

We now generalize the notion of alternating cycles, defined for $\fullG$
in [\ref{ref-ram}], to $\partialG$.

Let $G = (V,A) \in \partialG$. A sequence of $2r$ {\em distinct} arcs
$X = (e_0, e_1, ..., e_{2r-1})$ in $A$ is an {\it alternating cycle (AC)}
if $e_i$ and $e_{i \oplus 1}$ have a common end-vertex [start-vertex] if $i$ is even [odd],
where $\oplus$ denotes addition mod $2r$. The arcs of an alternating cycle can be
partitioned into two disjoint sets called the set of {\em forward} and {\em backward}
arcs:
$X_f = \{ e_i \; | \; i \; \mbox{is even}\}$ and
$X_b = \{ e_i \; | \; i \; \mbox{is odd}\}$.
In [\ref{ref-ram}] we use the terms {\em clockwise} and {\em anti-clockwise} for these
arcs. The naming of the sets as ``forward'' and
``backward'' is arbitrary since traversing the arcs in the opposite direction switches
the sets. $X$ is called {\it even} or {\it odd} according as $r$ is even or odd. Clearly,
we must have $|X_f| = |X_b| = r$.

Informally, an AC is formed by starting with an arc, traversing it
forward, then traversing the next arc backward (which is guaranteed to be uniquely
possible since the end-vertex has indegree 2); this process of alternating forward
and backward traversals is continued until we return to the starting arc. So, an AC
always has an even number of arcs, half of which are forward and the rest backward.

Clearly, the arcs of $G$ can be uniquely partitioned into ACs in linear time. We show
later in proposition \ref{prop-factor} that a factor of $G$ can equivalently be defined
as a subgraph that includes all of $X_f$ and none of $X_b$ or vice versa for every AC
$X$ in $G$. A cycle [path] in $G$ is called a {\em difactorial cycle}
[{\em difactorial path}] if it is part of some factor of $G$; for any AC $X$, such a cycle
[path] cannot intersect both $X_f$ and $X_b$ (it may intersect neither).

We will henceforth represent $G \in \partialG$ by the triple $(V, A, C)$ where $C$
is the set of ACs.
The {\em index} of $G$, denoted by $i(G)$ is the smallest index among its factors, i.e.
\[
i(G) = \text{min}\{ i(F) \; | \; F \; \text{is a factor of} \; G\}
\]
Clearly, $G$ is open iff $i(G) = 0$; if $G \in \fullG$, it is Hamiltonian iff $i(G) = 1$.

Let $K \subseteq C, K \neq \emptyset$ and $G^K = (V^K, A^K, K)$ be the subgraph induced
by $K$. Clearly, $G^K \in \partialG$ and so $V^K$ is partitioned into sets
$V^K_{entry}, V^K_{exit}, V^K_{sat}$.
We will use $K$ and $G^K$ interchangeably
when there is little chance of confusion. The complement $\overline{K} = C-K$ also
induces a subgraph $G^{\overline{K}}$ or simply $\overline{K}$. When $K = \{X\}$ is a
singleton, we use $K$ and $X$ interchangeably. $G^K$ is {\em proper subgraph} if $K$ is a
proper subset of $C$. {\em In the rest of this document, the term} {\bf subgraph}
{\em will almost always mean such a 2-digraph induced by a subset of $C$}.

By {\em splitting} a saturated vertex $v$ we mean replacing it with 2 vertices, $v^{in}$
and $v^{out}$, where the two arcs of $v_{in}$ [$v_{out}$] become in-arcs [out-arcs] of
$v^{in}$ [$v^{out}$]. The inverse operation of {\em splicing} consists of identifying an
entry vertex $u$ with an exit vertex $v$ to create a new saturated vertex. Both
operations yield a 2-digraph.

If $G$ is closed, we say it is {\em minimally closed} if splitting $v$ makes $G$ open for
every $v \in V$,

If $G$ is closed, we can get a minimally closed graph $G'$ from it by splitting some
number of its saturated vertices.  We will explore such graphs later in section
\ref{sec-closed}.

A subset $S \subseteq V_{sat}$ is called a {\em split-set} if splitting all the vertices
of $S$ increases the number of components. $S$ is a {\em minimal split-set} if no
proper subset of $S$ is also a split-set. When $S$ is split, the components resulting
from the split are called {\em split-components}. $G$ is called {\em k-splittable}
if it has a split-set with $k$ elements and {\em minimally k-splittable} if that split
set is minimal.

For an AC $X$, an element of $X_{sat}$ is an {\em internal saturated vertex} or
simply an {\em internal vertex} of $X$. If $X$ has no internal vertices, it is called
{\em clean}, otherwise {\em dirty}. $G$ is {\em clean} if {\em all} of its ACs are clean,
{\em dirty} otherwise.

We can partition $V_{sat}$ into complementary subsets $V^i$ and $V^b$ defined as:
\[
V^i = \{v \; | \; v \; \mbox{is an internal vertex of some AC of $G$} \}
\]
and its complement $V^b = V_{sat} - V^i$. It is sometimes useful to refer to elements
of $G^b$ as {\em boundary vertices} since each is a member of two ACs.

For a factor $F$ of $G$, let $\overline{F}$ denote the subgraph induced by the
complementary arc-set $A-F$. Since $G \in \partialG$, clearly, $\overline{F}$ must
also be a factor of $G$. We call it the {\em complementary factor} or simply the
{\em complement}. Clearly if $G \in \fullG$, we must have $F_p = \emptyset$ for all
factors $F$.

A 2-dd $G \in \fullG$ is {\em odd} [{\em even}] iff $i(F)$ is odd [even] for every
factor $F$ (it is possible that $G$ is neither). We call a pair of 2-dds
{\em H-equivalent} iff they are either both Hamiltonian or both non-Hamiltonian.

For any family of 2-digraphs, we use the {\bf superscript} $2k$ to denote the sub-family
where each AC has exactly $2k$ arcs and {\em clean} [{\em dirty}] the sub-family of clean
[dirty] graphs.
Similarly, we use the {\bf subscript} $m$ to denote the sub-family where each graph has
exactly $m$ ACs and finally, {\em odd} [{\em even}] to denote the sub-family where every
AC is odd [even].

So, for notational illustration, we have the family hierarchies:
\[\partialG^{2k,clean}_{m} \subset \partialG^{2k}_m \subset \partialG^{2k} \subset \partialG
\] \[
\fullG^{2k,clean}_{m,odd} \subset \fullG^{2k,clean}_{m}
\subset \fullG^{2k}_m \subset \fullG^{2k} \subset \fullG
\] \[
\fullG^{clean}_{m, odd} \subset \partialG^{clean}_{m} \enspace \mbox{and} \enspace
\fullG^{clean}_{m, even} \subset \partialG^{clean}_{m}
\]

Some results below focus on the families $\partialG^6$ and $\fullG^6$, so it is useful to
consider all ten forms that an AC with six arcs can take (i.e all ten elements of
$\partialG^6_1$), shown in Figure \ref{fig-ac6} where the forward arcs are
solid and the backward, dotted. The last 3 graphs constitute $\fullG^6_1$.
Properties of these ten graphs are summarized in Table \ref{tab-ac}.

\begin{figure}[ht]
  \centering
  % Useful links:
% https://tex.stackexchange.com/questions/57152/how-to-draw-graphs-in-latex
% https://tex.stackexchange.com/questions/115145/how-to-define-the-default-vertical-distance-between-nodes
%
\begin{tikzpicture}[->, node distance={10mm}, thick, main/.style = {circle}]    % 
\tikzstyle{every text node part}=[font=\tiny, inner sep=.3]
% Clean AC
\node[main] (a1) [draw] {$1$}; 
\node[main] (a2) [draw, above right of=a1] {$2$};
\node[main] (a3) [draw, right of=a2] {$3$}; 
\node[main] (a4) [draw, below right of=a3] {$4$};
\node[main] (a5) [draw, below left of=a4] {$5$}; 
\node[main] (a6) [draw, left of=a5] {$6$};
\node (name) at (1.3, 0) [font=\small] {$X_{clean}$};
% solid lines for forward and dotted for backward arcs; can also use
% "densely dotted" or "loosely dotted"
\draw (a1) -> (a2);          % [out=135,in=90,looseness=1.5]
\draw[dotted] (a3) -> (a2);  % [out=180,in=270,looseness=5] 
\draw (a3) -> (a4);
\draw[dotted] (a5) -> (a4);
\draw (a5) -> (a6);
\draw[dotted] (a1) -> (a6);          % [out=135,in=90,looseness=1.5]

% AC with 1 loop
\node[main] at (4,0) (b1) [draw] {$1$};                       % loop
\node[main] (b2) [draw, above right of=b1] {$2$};
\node[main] (b3) [draw, right of=b2] {$3$}; 
\node[main] (b4) [draw, below=8mm of b3] {$4$};
\node[main] (b5) [draw, left of=b4] {$5$}; 
\node (name) at (5.1, 0) [font=\small] {$X_{1L}$};
\draw[dotted] (b1) to [out=160,in=200,looseness=12] (b1);
\draw (b1) -> (b2);
\draw[dotted] (b3) -> (b2);
\draw (b3) -> (b4);
\draw[dotted] (b5) -> (b4);
\draw (b5) -> (b1);

% AC with 1 non-loop saturated vertex
\node[main] at (7.3,0) (c1) [draw] {$1$};                       % saturated
\node[main] (c2) [draw, above right of=c1] {$2$};
\node[main] (c3) [draw, below right of=c1] {$3$}; 
\node[main] (c4) [draw, below left of=c1] {$4$};
\node[main] (c5) [draw, above left of=c1] {$5$}; 
\node (name) at (7.3, 0.9) [font=\small] {$X_{1S}$};
\draw (c1) to (c2);
\draw[dotted] (c3) -> (c2);
\draw (c3) -> (c1);
\draw[dotted] (c4) -> (c1);
\draw (c4) -> (c5);
\draw[dotted] (c1) -> (c5);

% AC with 2 loops and open
\node[main] at (9,0.5) (d1) [draw] {$1$};                     % loop
\node[main] (d2) [draw, right of=d1] {$2$};                   % loop
\node[main] (d3) [draw, below of=d2] {$3$}; 
\node[main] (d4) [draw, left of=d3] {$4$};
\node (name) at (9.5, 0) [font=\small] {$X_{2L}$};
\draw[dotted] (d1) to [out=110,in=70,looseness=12] (d1);
\draw[dotted] (d2) to [out=110,in=70,looseness=12] (d2);
\draw (d1) to (d2);
\draw (d2) -> (d3);
\draw[dotted] (d3) -> (d4);
\draw (d4) -> (d1);

% AC with 2 non-loop saturated vertices
\node[main] at (0,-3) (f1) [draw] {$1$};
\node[main] (f2) [draw, above right of=f1] {$2$};
\node[main] (f3) [draw, below right of=f1] {$3$}; 
\node[main] (f4) [draw, right=8mm of f1] {$4$};
\node (name) [font=\small, above of=f1] {$X_{2S}$};
\draw (f1) to (f2);
\draw[dotted] (f3) to [out=70,in=-70,looseness=1] (f2);
\draw (f3) to (f4);
\draw[dotted] (f2) -> (f4);
\draw (f2) to [out=250,in=110,looseness=1] (f3);
\draw[dotted] (f1) to (f3);

% AC with 2 loops that is closed
\node[main] at (2.4,-3) (g1) [draw] {$1$};
\node[main] (g2) [draw, above right of=g1] {$2$};             % loop
\node[main] (g3) [draw, below right of=g1] {$3$};             % loop 
\node[main] (g4) [draw, right=8mm of g1] {$4$};
\node (name) [font=\small, above of=g1] {$X^c_{2L}$};
\draw (g1) to (g2);
\draw[dotted] (g2) to [out=70,in=110,looseness=11] (g2);
\draw (g2) to (g4);
\draw[dotted] (g3) -> (g4);
\draw (g3) to [out=250,in=290,looseness=11] (g3);
\draw[dotted] (g1) to (g3);

% AC with 1 loop and 1 non-loop saturated vertex that is closed
\node[main] at (5,-3) (h1) [draw] {$1$};                    % loop  
\node[main] (h2) [draw, right of=h1] {$2$};
\node[main] (h3) [draw, below right of=h2] {$3$};
\node[main] (h4) [draw, above right of=h2] {$4$};
\node (name) [font=\small, above of=h1] {$X^c_{1L1S}$};
\draw[dotted] (h1) to [out=160,in=200,looseness=11] (h1);
\draw (h1) to [out=20,in=160,looseness=1] (h2);
\draw[dotted] (h3) to (h2);
\draw (h3) -> (h4);
\draw[dotted] (h2) -> (h4);
\draw (h2) to [out=200,in=-20,looseness=1] (h1);

% These have 3 vertices, all saturated

% AC with 2 loops and 1 non-loop saturated vertex
\node[main] at (0,-6) (i1) [draw] {$1$};              % loop  
\node[main] (i2) [draw, right of=i1] {$2$};
\node[main] (i3) [draw, right of=i2] {$3$};           % loop
\node (name) [font=\small, above=3mm of i1] {$X^c_{2L1S}$};
\draw[dotted] (i1) to [out=160,in=200,looseness=11] (i1);
\draw (i1) to [out=20,in=160,looseness=1] (i2);
\draw[dotted] (i3) to [out=200,in=-20,looseness=1] (i2);
\draw (i3) to [out=20,in=-20,looseness=11] (i3);
\draw[dotted] (i2) to [out=20,in=160,looseness=1] (i3);
\draw (i2) to [out=200,in=-20,looseness=1] (i1);

% AC with 3 loops
\node[main] at (4.5,-6) (j1) [draw] {$1$};             % loop
\node[main] (j2) [draw, right=8mm of j1] {$2$};      % loop
\node[main] (j3) [draw, below left of=j2] {$3$};     % loop
\node (name) [font=\small, above=8mm of j3] {$X^c_{3L}$};
\draw[dotted] (j1) to [out=160,in=200,looseness=11] (j1);
\draw (j1) to (j2);
\draw[dotted] (j2) to [out=20,in=-20,looseness=11] (j2);
\draw (j2) to (j3);
\draw[dotted] (j3) to [out=250,in=290,looseness=11] (j3);
\draw (j3) to (j1);

% AC with 3 non-loop saturated vertices
\node[main] at (8,-6) (j1) [draw] {$1$};
\node[main] (j2) [draw, right=8mm of j1] {$2$};
\node[main] (j3) [draw, below left of=j2] {$3$};
\node (name) [font=\small, above=8mm of j3] {$X^c_{3S}$};
\draw[dotted] (j2) to [out=150,in=30,looseness=1] (j1);
\draw (j1) to (j2);
\draw[dotted] (j1) to [out=-90,in=180,looseness=1] (j3);
\draw (j2) to (j3);
\draw[dotted] (j3) to [out=0,in=-90,looseness=1] (j2);
\draw (j3) to (j1);

\end{tikzpicture}
  \caption{Types of an AC with six arcs}
  \label{fig-ac6}
\end{figure}
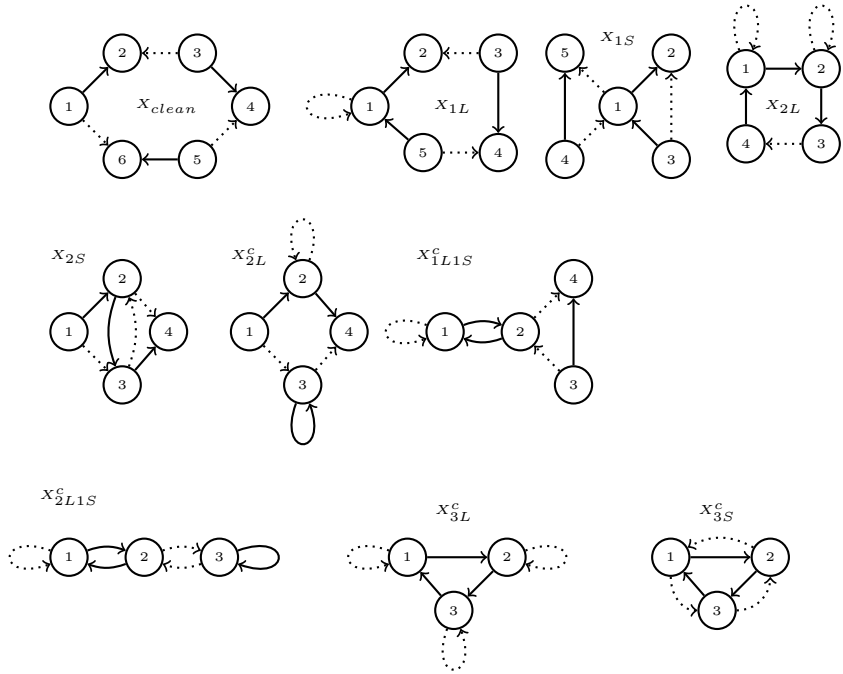

\begin{table}[ht]
  \caption{Properties of an AC with six arcs}
  \centering
  \begin{tabular}{l|l|l|l|l|l}
    \hline\hline
    &          &       & exit/entry & open factors, \\
    name & vertices & loops & vertices   & routes        & open/closed \\
    \hline
    $X_{clean}$   & 6 & 0 & 3            & 2, 2          & open \\
    $X_{1L}$      & 5 & 1 & 2            & 1, 1          & open \\
    $X_{1S}$      & 5 & 0 & 2            & 2, 1          & open \\
    $X_{2L}$      & 4 & 2 & 1            & 1, 1          & open \\
    $X_{2S}$      & 4 & 0 & 1            & 2, 1          & open \\
    \hline
    $X^c_{2L}$    & 4 & 2 & 1            & 0, 0          & closed \\
    $X^c_{1L1S}$  & 4 & 1 & 1            & 0, 0          & closed \\
    \hline
    $X^c_{2L1S}$  & 3 & 2 & 0            & 0, 0          & closed \\
    $X^c_{3L}$    & 3 & 3 & 0            & 0, 0          & closed \\
    $X^c_{3S}$    & 3 & 0 & 0            & 0, 0          & closed \\
    \hline
  \end{tabular}
  \label{tab-ac}
\end{table}

\section{Preliminaries}\label{sec-prelim}

We can show that connected graphs in $\fullG$ must be strongly connected. This is
true more generally for $k$-regular digraphs; to prove it we need the following lemma.

\begin{lemma}\label{lem-strong}
  Suppose $G = (V, A)$ is a $k$-regular digraph and $U \subset V$. Then,
  $|U_{in}| = |U_{out}|$.
\end{lemma}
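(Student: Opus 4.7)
The plan is a straightforward double-counting argument based on the fact that in any $k$-regular digraph, every vertex contributes exactly $k$ to the in-degree sum and exactly $k$ to the out-degree sum.

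First I would introduce the set $A_{UU}$ of arcs with both endpoints in $U$, so that the arcs with tail in $U$ partition into $A_{UU}$ and $U_{out}$, while the arcs with head in $U$ partition into $A_{UU}$ and $U_{in}$. Summing out-degrees over $U$ and summing in-degrees over $U$ each gives $k|U|$, which yields the two identities
\[
|A_{UU}| + |U_{out}| = k|U| = |A_{UU}| + |U_{in}|.
\]
Subtracting eliminates $|A_{UU}|$ and gives $|U_{in}| = |U_{out}|$, as required.

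There is essentially no obstacle here: the only thing to be mildly careful about is the bookkeeping, namely that an arc contributes to exactly one of $\{A_{UU}, U_{out}\}$ when counted by its tail and to exactly one of $\{A_{UU}, U_{in}\}$ when counted by its head, and that loops (if allowed) lie in $A_{UU}$ on both counts and therefore cancel. No appeal to any earlier result in the excerpt is needed.
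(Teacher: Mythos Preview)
Your argument is correct. The double-counting identity $|A_{UU}|+|U_{out}|=k|U|=|A_{UU}|+|U_{in}|$ is exactly the right observation, and your remark about loops being absorbed into $A_{UU}$ on both sides handles that edge case cleanly.

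The paper, however, proves this by induction on $|U|$: the base case $|U|=1$ is immediate from $k$-regularity, and for the inductive step one removes a vertex $v$ from $U$, applies the inductive hypothesis to $W=U\setminus\{v\}$, and then tracks how adding $v$ back changes the boundary counts via the number of arcs between $v$ and $W$. Your direct counting argument is shorter and more transparent---it avoids the bookkeeping of the inductive step entirely and makes the underlying reason (equality of in- and out-degree sums over $U$) explicit in a single line. The paper's inductive argument buys nothing extra here; it is simply a different (and slightly heavier) route to the same identity.
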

\begin{proof} The proof is by induction on $|U|$. Since $G$ is $k$-regular, the result
  clearly holds when $|U| = 1$. Suppose it holds for all subsets of size $n$ and let
  $U$ be a subset of size $n+1$. For an arbitrary element $v \in U$ consider the set
  $W = U - \{v\}$. By the inductive hypothesis, $|W_{in}| = |W_{out}| = d$. Let $c_1$
  [$c_2$] be the count of arcs entering [exiting] $v$ from [to] $W$. The count of arcs
  entering [exiting] $v$ from [to] $V - W$ is $k - c_1$ [$k - c_2$]. Now
  $|U_{in}| = (k - c_1) + (d - c_2)$ and $|U_{out}| = (k - c_2) + (d - c_1)$ which
  completes the proof.
\end{proof}

\begin{theorem}
  Suppose $G = (V, A)$ is a connected $k$-regular digraph. $G$ must be strongly
  connected.
\end{theorem}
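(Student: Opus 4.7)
The plan is to argue by contradiction using Lemma \ref{lem-strong}. Assume $G$ is connected but not strongly connected. Then by definition there exist vertices $u, v \in V$ such that no directed path from $u$ to $v$ exists. Let
\[
U = \{ w \in V \mid \text{there is a directed path from } u \text{ to } w \},
\]
where we count the trivial path, so $u \in U$. Since $v \notin U$, the set $U$ is a proper nonempty subset of $V$.

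The key observation is that $U_{out} = \emptyset$: if some arc $(w, x)$ with $w \in U$ and $x \in V - U$ existed, concatenating a $u$-to-$w$ path with $(w,x)$ would place $x$ in $U$, a contradiction. Applying Lemma \ref{lem-strong} to this proper subset $U$ gives $|U_{in}| = |U_{out}| = 0$, so there are no arcs at all between $U$ and $V-U$ in either direction. But then no edge of the underlying undirected graph joins $U$ to $V - U$, contradicting the assumption that $G$ is connected.

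The only subtlety is ensuring $U$ is both nonempty and a proper subset so that Lemma \ref{lem-strong} applies; this follows at once from $u \in U$ and $v \notin U$. Everything else is essentially bookkeeping, so there is no real obstacle beyond correctly choosing the reachability set and invoking the preceding lemma.
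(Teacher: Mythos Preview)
Your proof is correct. Both arguments locate a proper nonempty subset $U$ with $U_{out}=\emptyset$ and then invoke Lemma~\ref{lem-strong}, but they construct $U$ differently. The paper decomposes $G$ into its strongly connected components, forms the condensation (superstructure) digraph, and takes $U$ to be a sink component $C_m$ of that DAG; the contradiction is that $C_m$ must have an entering arc (by connectedness of the condensation) yet no exiting arc. You instead take $U$ to be the forward reachability set from a single vertex $u$, obtain $|U_{in}|=|U_{out}|=0$ from the lemma, and contradict connectedness of the underlying graph directly. Your route is more elementary and self-contained, avoiding the machinery of SCC decomposition and the appeal to an external reference; the paper's route has the minor advantage that the existence of an incoming arc to the sink is immediate from the DAG structure, so the final contradiction with Lemma~\ref{lem-strong} is one step shorter. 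Either way the core idea---a degree-balance argument on a set with empty out-boundary---is the same.
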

\begin{proof}
  Assume that $G$ is connected but not strongly connected. We can partition G into its
  stongly connected components $\{C_i\}_1^n$ ($n > 0$) and define the
  {\em superstructure} digraph $\tilde{G}$ in which there is an arc $(C_i, C_j)$ iff $G$
  has an arc $(u, v)$ with $u \in C_i, v\in C_j$ as described in [\ref{ref-even}]. This
  graph must be a connected DAG and hence must have a sink, say $C_m$. Since $C_m$ must
  have at least one arc entering from outside but cannot have any arcs exiting to the
  outside this is a contradiction of Lemma \ref{lem-strong} and the proof is concluded.
\end{proof}

\begin{prop}
  For $G \in \partialG$, we must have $|V_{entry}| = |V_{exit}|$ so $G$ must
  have an even number of unsaturated vertices.
  \label{prop-ee}
\end{prop}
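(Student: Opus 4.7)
My plan is to use a simple double counting of the arcs of $G$, tallying once by out-degrees and once by in-degrees. By the definition of a 2-digraph, every vertex falls into exactly one of three classes with fixed degree signatures: entry vertices contribute $(|v_{in}|, |v_{out}|) = (0,2)$, exit vertices contribute $(2,0)$, and saturated vertices contribute $(2,2)$. Since $V_{entry}, V_{exit}, V_{sat}$ partition $V$, summing the out-degrees and in-degrees over $V$ is straightforward.

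The sum of out-degrees equals $2|V_{entry}| + 2|V_{sat}|$ while the sum of in-degrees equals $2|V_{exit}| + 2|V_{sat}|$. Both counts equal $|A|$, so equating them and cancelling the common $2|V_{sat}|$ yields $|V_{entry}| = |V_{exit}|$. Consequently the total number of unsaturated vertices, $|V_{entry}| + |V_{exit}| = 2|V_{entry}|$, is even.

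There is no real obstacle here; the only thing to be careful about is that the argument does not invoke Lemma \ref{lem-strong}, which assumes $k$-regularity, whereas a 2-digraph need not be regular (entry and exit vertices have only one of the two degrees equal to $2$). The degree-sum identity $\sum |v_{out}| = |A| = \sum |v_{in}|$ holds for every digraph, which is all that is needed.
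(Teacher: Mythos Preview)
Your proof is correct and is in fact more elementary than the paper's argument. The paper proves the proposition via its alternating-cycle machinery: it first treats the case $V_{sat} = \emptyset$, where $G$ is a disjoint union of clean ACs and each clean AC with $2r$ arcs visibly has $r$ entry and $r$ exit vertices; it then reduces the general case to this one by successively splitting every saturated vertex and observing that each splice, when reversed, preserves the equality. Your route bypasses all of this structure and just invokes the digraph handshake identity $\sum_v |v_{out}| = |A| = \sum_v |v_{in}|$, which immediately gives $2|V_{entry}| + 2|V_{sat}| = 2|V_{exit}| + 2|V_{sat}|$. Your argument is shorter, needs no AC decomposition or splitting/splicing, and generalizes verbatim to $k$-digraphs; the paper's version has the modest advantage of exercising the splitting/splicing operations that are used later, but as a proof of this particular proposition yours is cleaner.
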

\begin{proof}
  Let $G = (V,A,C)$ and consider the case when $V_{sat}$ is empty. The vertices of
  $X \in C$ must be disjoint from those of any other AC $Y \in C$ (otherwise we
  would have a saturated vertex), so $G$ is a disjoint collection of clean ACs.
  The number of exit and entry vertices of any single clean AC
  $(e_0, e_1, ..., e_{2r-1})$ are both $r$ so we have equality.

  If $V_{sat}$ is non-empty, by successively splitting each saturated vertex, we arrive
  at the previous case; we can then successively reverse the process by splicing the
  pairs of vertices that were previously split, preserving equality at each step until
  we arrive back at $G$.
\end{proof}

Let $X = (e_0, e_1, ..., e_{2r-1})$ be an AC of a 2-digraph $G = (V,A,C)$.
The following remarks are easy to see (some are stated in [\ref{ref-ram}] 
for the more restricted context of 2-dds but similar arguments apply here):

{\em REMARKS:}\\
\begin{enumerate}[topsep=0pt]
\item $|X_{entry}| = |X_{exit}|$
\item $|A| = |V| + |V_{sat}|$
\item We must have $r \le |V^X| \le 2r$. If $X$ is clean, we must have $|V^X| = 2r$
  and $|X_{entry}| = |X_{exit}| = r$.
\item If $X_{entry}$ and $X_{exit}$ are both empty, $r = |X_{sat}|$ and $X$ must be a
  component of $G$ and a 2-dd.
\item If $X$ and $Y$ are distinct ACs of $G$, we must have
  $X^i \cap Y^i = \emptyset$.
\setcounter{enumTemp}{\theenumi}
\end{enumerate}

The AC-decomposition is important because it is closely related to factors as shown by
the following proposition:
\begin{prop}\label{prop-factor}
  Consider the subgraph $F$ of $G$ obtained by arbitrarily choosing $X_f$
  or $X_b$ from each $X \in C$; $F$ must be a factor of $G$. Conversely, any factor
  $H = (V, B)$ of $G$ must have $B \cap X_f = \emptyset,  X_b \subseteq B$ or
  $B \cap X_b = \emptyset,  X_f \subseteq B$ for each $X \in C$.
\end{prop}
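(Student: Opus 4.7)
The plan hinges on a single structural observation: in $G = (V, A, C) \in \partialG$, if a vertex $v$ has $|v_{in}| = 2$, then its two in-arcs occur as a consecutive pair $e_{2k}, e_{2k+1}$ inside a single AC. This is immediate from the AC-construction rule, which pairs each even-indexed arc with the unique other arc sharing its end-vertex: taking either in-arc of $v$ as $e_{2k}$, the partner $e_{2k+1}$ is forced to be the other in-arc of $v$. Hence one of these in-arcs lies in $X_f$ and the other in $X_b$ for the same AC $X$. Dually, the two out-arcs at any vertex with $|v_{out}| = 2$ form a consecutive pair inside a single AC, one forward and one backward.

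For the forward direction, fix an arbitrary choice $Y_X \in \{X_f, X_b\}$ for each $X \in C$ and set $F = \bigcup_{X \in C} Y_X$. At a saturated or exit vertex $v$, the two in-arcs both come from a unique AC $X$ (one forward, one backward), so $Y_X$ contributes exactly one of them to $F$; at entry vertices there are no in-arcs to begin with. The symmetric statement holds for out-arcs. Thus at every vertex the in/out-degree pattern in $F$ is one of $(0,1), (1,0), (1,1)$, so $F$ is a spanning 1-digraph and hence a factor of $G$.

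For the converse, let $H = (V, B)$ be an arbitrary factor and fix an AC $X = (e_0, \ldots, e_{2r-1})$. For each even $i$, the common end-vertex of $e_i$ and $e_{i \oplus 1}$ has $G$-in-degree $2$ and $H$-in-degree exactly $1$, so exactly one of these two arcs is in $B$. For each odd $i$, the analogous argument at the common start-vertex (using out-degrees) again forces exactly one of $e_i, e_{i \oplus 1}$ into $B$. Propagating these local constraints around the cyclic sequence toggles membership in $B$ at each step, and since the chain closes after $2r$ steps, the only two consistent global patterns are $X_f \subseteq B$ with $B \cap X_b = \emptyset$, or the reverse.

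The main delicate point is the opening structural observation that the two in-arcs (resp.\ out-arcs) at a vertex come from the \emph{same} AC rather than from two distinct ACs. This follows directly from the greedy AC-formation procedure, but it bears being stated explicitly; once in hand, both halves of the proposition reduce to local vertex-by-vertex bookkeeping.
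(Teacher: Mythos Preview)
Your proof is correct and follows essentially the same approach as the paper's, propagating the ``exactly one of each consecutive pair'' constraint around an AC. Your version is slightly cleaner: you state the key structural fact (that the two in-arcs, resp.\ out-arcs, at a vertex are a forward/backward pair in a single AC) explicitly, and your converse argument works uniformly for any AC without needing to start at a vertex of $V_{entry}$, which the paper's proof assumes exists.
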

\begin{proof}
  Assume wlog that $F = (W, E)$ is a subgraph of $G$ obtained by selecting $X_f$ from
  every AC $X \in C$. Every vertex of $G$ is part of at least one AC $X$ and must be the
  start-vertex or end-vertex of an arc of $X_f$; hence, it must part of $W$. So $W = V$
  and $F$ is a spanning subgraph of $G$. A vertex $v$ of $G$ must be an element of
  $V_{entry}$, $V_{exit}$ or $V_{sat} $; in each case, we will have
  $|v_{out}| = 1, |v_{in}| = 0$, $|v_{in}| = 1, |v_{out}| = 0$, or $|v_{in}| = 1 = |v_{out}|$
  respectively which makes $F$ a 1-digraph and therefore a factor.

  For the converse, consider an arbitrary $u \in V_{entry}$. Let $e_0 = (u, v)$ be one of
  the two outgoing arcs of $u$ and $X = (e_0, e_1, ..., e_{2r-1})$ be the AC containing
  $e_0$. Since $F$ is a factor, it must include exactly one element of $u_{out}$. Assume
  wlog that it includes $e_0$. It cannot include $e_1 = (w, v)$ (since it must have
  exactly one of $v_{in}$); it must therefore include $e_2$ (again, since it must
  have exactly one element of $w_{out}$). The result follows by continuing this argument
  until we return to $e_0$.
\end{proof}

{\bf Corollaries}:
\begin{enumerate}[(a)]
\item $G$ has $2^{|C|}$ factors or $2^{|C|-1}$ complementary factor pairs.
\item For any non-empty $K \subseteq C, |K| = k$, the subgraph $G^K = (V^K, A^K, K)$
  has $2^k$ factors and each factor $F$ defines a route $r_F$ mapping $V^K_{entry}$ to
  $V^K_{exit}$.
\end{enumerate}

\section{Split Sets}\label{sec-split}
We now look at how split-sets interact with factors. The following remarks are easy to
see ($G \in \partialG$):\\
{\em REMARKS:}\\
\begin{enumerate}[topsep=0pt]
\setcounter{enumi}{\theenumTemp}
\item If $G$ is connected and $S = \{v\}$ is a singleton split-set, it must
  have exactly two split-components, one containing $v_{in}$ and the other $v_{out}$.
\item If $S$ is a minimal split-set it must be entirely in one component of $G$.
\item If $G$ is connected and $S$ is a split-set with split-components
  $H^j, j = 1, ..., n$, we must have:
  \begin{gather*}
  |V_{sat}| = \sum_1^n|H^j_{sat}| + |S| \\
  |V_{entry}| = \sum_1^n|H^j_{entry}| - |S| \\
  |V_{exit}| = \sum_1^n|H^j_{exit}| - |S|
  \end{gather*}
    {\it Proof}: The saturated vertices of $H^j$ must also be saturated in $G$;
    a saturated vertex of $G$ on the other hand is also one in one of the $H^j$ iff
    it was not split. Similarly, an entry [exit] vertex of $G$ must also be one in
    (exactly) one $H^j$; however, an entry [exit] vertex of $H^j$ is either
    an entry [exit] vertex of $G$ or was an artifact of the split.
\setcounter{enumTemp}{\theenumi}
\end{enumerate}

When $S$ is a minimal split set of a connected 2-dd, we can say more:
\begin{prop}\label{prop-even-split-set}
  Suppose $G \in \fullG$ is connected and $S$ is a minimal split set.
  \begin{enumerate}[(a)]
  \item \label{two-split} Splitting $S$ yields exactly two split components.
  \item \label{diff-components} For all $v \in S$,
    $v^{in}$ and $v^{out}$ must be in different split components.
  \item \label{no-internal} $S \cap G^i = \emptyset$
  \item \label{even-split} $|S|$ is even.
  \end{enumerate}
\end{prop}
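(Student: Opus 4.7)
The plan is to establish the four parts in order since each relies on the previous, with the main obstacle at part (\ref{no-internal}). For (\ref{two-split}), I would pick any $v \in S$ and observe that splitting $S - \{v\}$ leaves $G$ connected (else $S - \{v\}$ would itself be a split set). Since splitting one further vertex can raise the component count by at most one, the split of all of $S$ yields at most two components, and at least two because $S$ is a split set.

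For (\ref{diff-components}), I would argue by contradiction: if $v^{in}$ and $v^{out}$ sat in the same split component, splicing $v$ back would merge nothing, so $S - \{v\}$ would already be a split set, contradicting minimality of $S$.

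For (\ref{no-internal}), the hard step, my approach would be to view the AC $X = (e_0, \ldots, e_{2r-1})$ containing $v$ as a closed walk in the underlying undirected multigraph: the alternation rule forces $e_i$ and $e_{i+1}$ to share a vertex, and at each such visit the two incident arcs are either both in-arcs (at an even index) or both out-arcs (at an odd index). The key lemma I would verify is that this closed walk survives the splitting of any $w \in V^X$, because each visit's two arcs anchor entirely at $w^{in}$ or entirely at $w^{out}$. If $v$ were internal to $X$, then all four arcs at $v$ would be in $X$ and the walk would visit $v$ exactly twice --- once through its two in-arcs (becoming a visit at $v^{in}$) and once through its two out-arcs (becoming a visit at $v^{out}$). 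Both visits would then lie on one closed walk in the split graph, placing $v^{in}$ and $v^{out}$ in the same component and contradicting (\ref{diff-components}).

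For (\ref{even-split}), I would count unsaturated vertices. Since $G \in \fullG$, the only entry or exit vertices of the split components arise from the split itself, and by (\ref{diff-components}) each $v \in S$ contributes its $v^{in}$ as an exit vertex to one component and its $v^{out}$ as an entry vertex to the other. Letting $a$ and $b$ denote the numbers of $v \in S$ whose $v^{in}$'s land in $H_1$ and $H_2$ respectively, component $H_1$ then has $a$ exit vertices and $b$ entry vertices, so Proposition \ref{prop-ee} applied to $H_1$ forces $a = b$, whence $|S| = 2a$ is even.
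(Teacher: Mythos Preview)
Your proof is correct and follows essentially the same approach as the paper. Parts (\ref{two-split}) and (\ref{diff-components}) are identical; for (\ref{no-internal}) you give the same idea (the AC containing $v$ keeps $v^{in}$ and $v^{out}$ in one component) but with more care about why splitting the other vertices of $S$ does not sever the walk, and for (\ref{even-split}) your direct entry/exit count in $H_1$ is a cleaner variant of the paper's contradiction-based unsaturated-vertex count, both resting on Proposition~\ref{prop-ee}.
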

\begin{proof}
  Consider the situation when $S' = S - \{v\}$ is split; since $S$ is minimal, we must
  still have a connected graph. Now splitting $v$ can only yield two components, so
  \ref{two-split} holds.

  For \ref{diff-components}, if $v^{in}$ and $v^{out}$ were in the
  same component, splicing them back would still leave us with two components violating
  the minimality of $S$, so \ref{diff-components} holds.

  If $v \in S \cap G^i$, splitting it requires both new vertices must be in the same
  component since they are part of the same AC which violates the previous item, so
  \ref{no-internal} follows from \ref{diff-components}.

  Finally, for \ref{even-split}, suppose $|S| = 2m + 1$ for some $m > 0$ and let
  $H_1$ and $H_2$ be the two split
  components resulting from splitting $S$. Consider again the situation when $S'$ is
  split: Since $2m$ vertices have been split, yielding $4m$ unsaturated vertices, we
  must (by (b)) have $2m$ in each of $H_1$ and $H_2$. When $v$ is split, each component
  has $2m + 1$ unsaturated vertices which contradicts proposition \ref{prop-ee}.
\end{proof}

Recall that, in a 2-digraph that is not saturated, each factor defines a route and that
distinct factors may define the same route. Since routes are bijections, they can be
viewed as permutations in $S_n$. We can use Theorem 2 of [\ref{ref-ram}] to deduce some
properties of these permutations:
\begin{theorem}\label{thm-parity}
  Suppose $G = (V, A, C) \in \partialG_{odd}$ with $|V_{exit}| = n > 0$.
  Let $R_{even}$ [$R_{odd}$] be the set of routes defined by all factors of $G$ with
  even [odd] indices.\\
  (a) $R_{odd}$ and $R_{even}$ are disjoint.\\
  (b) By labeling $V_{entry}$ and $V_{exit}$ with $[1, n]$, each route $r \in R_{odd}$ can
  be viewed as a permutation in $S_n$. All such permutations must have the same
  parity. The same holds for permutations defined by the routes of $R_{even}$.
\end{theorem}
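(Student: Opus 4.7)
The plan is to reduce the claim to Theorem 2 of [\ref{ref-ram}], which applies to $\fullG_{odd}$, by forming an auxiliary 2-dd $G^*$ from $G$ via splicing. Fix any labeling $V_{entry} = \{u_1, \ldots, u_n\}$, $V_{exit} = \{v_1, \ldots, v_n\}$, and splice $u_i$ with $v_i$ into a single saturated vertex $m_i$ for each $i$. Because splicing neither adds nor removes arcs, and because an AC is a sequence of arcs whose interaction with a vertex is captured by a shared-end pair of in-arcs and a shared-start pair of out-arcs, the ACs of $G^*$ are literally the ACs of $G$ as arc-sequences; in particular each remains odd, so $G^* \in \fullG_{odd}$. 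Proposition \ref{prop-factor} then gives a canonical bijection $F \leftrightarrow F^*$ between factors of $G$ and of $G^*$, sharing identical arc-sets.

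Next I would compute $i(F^*)$ in terms of the data of $F$. Every cycle of $F$ lies inside the saturated vertex set of $G$ and so misses every spliced vertex; each therefore persists as a cycle of $F^*$. Every path of $F$ runs from some $u_i$ to $v_{\pi_F(i)}$, where $\pi_F \in S_n$ is the permutation representing $r_F$ under the chosen labeling. After splicing, $v_{\pi_F(i)}$ is identified with $u_{\pi_F(i)}$, so the $F$-path out of $u_i$ flows directly into the $F$-path out of $u_{\pi_F(i)}$; a cycle of $\pi_F$ of length $k$ therefore binds $k$ successive paths of $F$ into a single closed cycle of $F^*$. Writing $\gamma(\pi_F)$ for the number of cycles of $\pi_F$, this accounting yields the identity
\[
i(F^*) \;=\; i(F) + \gamma(\pi_F).
\]

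Applying Theorem 2 of [\ref{ref-ram}] to $G^* \in \fullG_{odd}$ shows that $i(F^*) \bmod 2$ is the same for every factor $F^*$, so $i(F) + \gamma(\pi_F)$ is invariant modulo 2 as $F$ ranges over the factors of $G$. Since the parity of a permutation on $n$ letters equals $n - \gamma \pmod 2$ where $\gamma$ is its cycle count, this invariance rephrases as: $i(F) + \mathrm{sgn}(\pi_F)$ is invariant modulo 2. Both conclusions follow immediately. For (a), if $r_F = r_{F'}$ then $\pi_F = \pi_{F'}$, so $i(F) \equiv i(F') \pmod 2$, meaning no route can belong to both $R_{odd}$ and $R_{even}$. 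For (b), any two factors whose indices share parity produce routes whose permutation representatives share parity.

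The delicate step is the initial claim that splicing preserves the AC decomposition. It rests on the local observation that, at any saturated vertex of a 2-digraph, the AC structure is fully determined by the two independent pairings of in-arcs (shared-end) and out-arcs (shared-start); the merged vertex $m_i$ inherits exactly one shared-end pair from $v_i$ and one shared-start pair from $u_i$, each sitting in its original AC position without disturbing any arc sequence. Once this is granted, the remainder is routine bookkeeping, and the two conclusions fall out together from the single modular invariant produced by Theorem 2 of [\ref{ref-ram}].
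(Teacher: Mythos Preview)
Your proof is correct and follows essentially the same route as the paper: splice the entry/exit vertices pairwise to obtain a saturated graph in $\fullG_{odd}$, observe that $i(F^*) = i(F) + \gamma(\pi_F)$, and invoke Theorem~2 of [\ref{ref-ram}] to force $i(F) + \gamma(\pi_F)$ to have constant parity. The only presentational differences are that you fix the identity pairing from the outset and extract both (a) and (b) from the single invariant, whereas the paper treats them in sequence; and you make explicit the justification that splicing preserves the AC decomposition (hence $G^* \in \fullG_{odd}$), a point the paper uses without comment.
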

\begin{proof}
  Using an arbitrary bijection $f$ from $V_{entry}$ to $V_{exit}$, pair each 
  $u \in V_{entry}$ with $v = f(u) \in V_{exit}$ and consider the 2-dd $G'$ obtained by
  splicing all the $(u, v)$ pairs. A factor $F$ of $G$ becomes a factor $F'$ of $G'$
  with $F'_p = \emptyset$; the paths of $F_p$ become a collection $D_F$ of disjoint
  cycles in $F'_c$, so $i(F') = i(F) + |D_F|$. Similarly, if $H$ is any other factor of
  $G$, $i(H') = i(H) + |D_H|$.

  To prove (a), observe that the only way a route $r_F$ can be in both $R_{odd}$ and
  $R_{even}$ is if we have distinct factors $F$ and $H$ with $i(F)$ and $i(H)$ having
  different parities but defining the same route, i.e. $r_F = r_H$. This means
  $|D_F| = |D_H|$. By Theorem 2 of [\ref{ref-ram}], $i(F')$ and $i(H')$ must have the
  same parity, and therefore so must $i(F)$ and $i(H)$ and (a) is proved.

  To prove (b), assume $i(F)$ and $i(H)$ are odd. A labelling of $V_{entry}$ and $V_{exit}$
  with $[1, n]$ yields a natural pairing $(i, i)$ of entry vertices to exit vertices,
  which means that $D_F$ and $D_H$ are cyclic decompositions of permutations $p_F$ and
  $p_H$ in $S_n$ defined by the corresponding routes $r_F, r_H \in R_{odd}$. Again, by
  Theorem 2 of [\ref{ref-ram}], $i(F')$ and $i(H')$ must have the same parity, so $|D_F|$
  and $|D_H|$ have the same parity, so the parity of the corresponding permutations
  $p_F$ and $p_H$ must be the same. The proof for $R_{even}$ is similar.
\end{proof}

{\bf Corollaries}:
\begin{enumerate}[(a)]
\item
  All permutations defined by open routes of $G$ have the same parity.
  {\em Proof}: An open route is defined by an open factor; an open factor has zero index
  since it has no cycles.
\item
  $|V_{exit}| = n = 2$ and $G$ is open $\implies G$ has a unique open route.
  {\em Proof}: By Proposition \ref{prop-ee}, $G$ must have two entry vertices, so let
  $V_{entry} = \{u_0, u_1\}, V_{exit} = \{v_0, v_1\}$ and $r$ is an open route that maps
  $u_i$  to $v_i$. Let $r' \neq r$ be another open route; it must map $u_0$ to $v_1$ and
  $u_1$ to $v_0$. Clearly, the permutations $p_0, p_1 \in S_2$ defined by these routes
  will have different parities (one is the identity and the other a transposition) which
  contradicts the corollary above.
\end{enumerate}
\begin{theorem} \label{thm-split2}
  Suppose $G \in \fullG_{odd}$ is odd and $S = \{u, v\}$ a minimal split-set with split
  components $G_1$ and $G_2$. Assume wlog that they contain $\{u^{in}, v^{out}\}$ and
  $\{u^{out}, v^{in}\}$ respectively. Let $G'_1, G'_2 \in \fullG$ be the 2-dds obtained by
  splicing these two pairs. Then,\\
  (a) $G$ is Hamiltonian $\iff$ both $G'_1$ or $G'_2$ are Hamiltonian.\\
  (b) $G'_1$ or $G'_2$ is even $\implies$ both are even (and $G$ is non-Hamiltonian).
\end{theorem}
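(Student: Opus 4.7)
The plan is to set everything up around a bijection between factors of $G$ and pairs of factors $(F'_1, F'_2)$ of $G'_1$ and $G'_2$, then derive a single index identity that makes both parts of the theorem fall out. First I would show that every AC of $G$ lies entirely in one of $G_1, G_2$. By Proposition \ref{prop-even-split-set}(c), $S$ is disjoint from $V^i$, so $u$ and $v$ are boundary vertices. If $e_i$ and $e_{i+1}$ are consecutive arcs of an AC, they share an endpoint $w$; whether $w \notin S$ (and lies in exactly one component) or $w \in S$ (so both arcs attach to the same copy $w^{in}$ or $w^{out}$ after splitting), the two arcs remain in the same split component. Inductively, each AC of $G$ sits in a single $G_i$, so $C = C_1 \sqcup C_2$. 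Splicing a boundary entry to a boundary exit does not affect the alternating-cycle structure, so the ACs of $G'_i$ are exactly $C_i$; by Proposition \ref{prop-factor}, factors of $G$ are then in natural bijection with pairs of factors of $G'_1$ and $G'_2$.

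The crux is the identity $i(F) = i(F'_1) + i(F'_2) - 1$ for corresponding $(F, F'_1, F'_2)$. Since $G$ is a 2-dd, $F$ is a disjoint union of $i(F)$ directed cycles, and $F$ uses exactly one in-arc and one out-arc at each of $u, v$, so at most one cycle of $F$ contains $u$ and at most one contains $v$. A cycle containing $u$ but not $v$ would, upon splitting $u$ and $v$, become a path from $u^{out} \in G_2$ to $u^{in} \in G_1$ whose interior avoids $v$, impossible since $G_1$ and $G_2$ are then disconnected. So exactly one cycle $Z$ of $F$ passes through both $u$ and $v$, while the other $i(F) - 1$ cycles each lie wholly in some $G_i$. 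Splitting cuts $Z$ into two paths, one from $u^{out}$ to $v^{in}$ in $G_2$ and one from $v^{out}$ to $u^{in}$ in $G_1$; splicing turns each into a single cycle of $F'_i$. Counting cycles on both sides yields $i(F'_1) + i(F'_2) = i(F) + 1$.

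With the identity in hand, (a) is immediate: any factor of a 2-dd has index at least one, so $i(F) = 1$ is equivalent to $i(F'_1) = i(F'_2) = 1$, and the bijection makes this equivalent to both $G'_i$ being Hamiltonian. For (b), suppose $G'_1$ is even; for an arbitrary factor $F'_2$ of $G'_2$, pair it with any factor $F'_1$ of $G'_1$, so that $i(F'_1) + i(F'_2) = i(F) + 1$ is even because $G$ is odd. Since $i(F'_1)$ is even, $i(F'_2)$ is even as well, and as $F'_2$ was arbitrary, $G'_2$ is even. Finally, any Hamiltonian factor of a $G'_i$ would have odd index $1$, so evenness of $G'_1$ combined with (a) forces $G$ to be non-Hamiltonian. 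The main obstacle is the structural statement in paragraph two -- that exactly one cycle of $F$ passes through both $u$ and $v$ -- after which everything reduces to routine cycle-counting and parity bookkeeping.
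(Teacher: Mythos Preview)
Your proof is correct and follows essentially the same route as the paper's: both hinge on the index identity $i(F) = i(F'_1) + i(F'_2) - 1$ (the paper writes it out numerically as $(2m-1)+(2n+1-1)+1$), obtained from the observation that the unique cycle of $F$ through the split vertices breaks into one cycle in each $G'_i$ while all other cycles stay put. Your version is more explicit about the AC partition $C = C_1 \sqcup C_2$ and the resulting factor bijection, which the paper's terse argument simply takes for granted.
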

\begin{proof}
  A Hamiltonian cycle of $G$ splits into a Hamiltonian path in each of the two split
  components; both are turned into Hamiltonian cycles in their respective components by
  the splicing so the forward implication of (a) is established. The converse is also
  similarly obvious. For (b), assume wlog that $G'_1$ is even. If $G'_2$ is odd, let
  $F_1$ and $F_2$ be arbitrary factors of $G'_1$ and $G'_2$ respectively with
  $i(F_1) = 2m$ and $i(F_2) = 2n+1$. Now, consider the unique cycles $L_1$ and $L_2$ in
  the respective factors which contain the spliced vertices in the two components. These
  must be the two parts of a single cycle $L$ in $G$, so the number of components in the
  corresponding factor of $G$ must be $(2m - 1) + (2n + 1 - 1) + 1 = 2(m+n)$ which is
  even. This contradicts our assumption that $G$ is odd.
\end{proof}

This theorem also yields a simple, computationally efficient, sufficient condition for
non-Hamiltonicity of odd 2-splittable graphs in $\fullG_{odd}$: given such a graph
$G = (V, A, C)$, create the split components and splice the exit and entry vertices of
each to get two new 2-dds $G_1, G_2 \in \fullG_{odd}$. Now create a factor $F$ in either
by arbitrarily choosing either $X_f$ or $X_b$ from each AC $X$; if $F$ is even, $G$ is
not Hamiltonian; otherwise, it is possible that one or both are themselves 2-splittable;
if so, repeat the process. When this iterative process concludes, either we have
established the non-Hamiltonianicity of $G$ or we have a set of graphs
$\{G_i = (V_i, A_i, C_i)\}_1^n$ in $\fullG_{odd}$ ($n \leq |C|$, $\sum_i |C_i| =  |C|$)
each of which is odd and non-splittable with the property that if any of its elements is non-Hamiltonian, so is $G$.

The connected 2-dd of Figure \ref{fig-ac4-nonH} is in $\fullG^{6,clean}_4$ is odd,
strongly connected, minimally 2-splittable with split-set $S = \{5, 7\}$; by splitting
$S$ and splicing the unsaturated vertices in each component as described above, we can
verify that both of the resulting 2-dds are even and conclude that the original graph is
non-Hamiltonian.
\begin{figure}[!h]
  \centering
  % Useful links:
% https://tex.stackexchange.com/questions/57152/how-to-draw-graphs-in-latex
% https://tex.stackexchange.com/questions/115145/how-to-define-the-default-vertical-distance-between-nodes
%
\begin{tikzpicture}[->, node distance={10mm}, thick, main/.style = {circle}]    % 
\tikzstyle{every text node part}=[font=\tiny, inner sep=.3]
\tikzset{node distance = 2cm and 2cm}

% 2-dd with 4 ACs that is connected, clean, odd, and non-Hamiltonian
% Vertices
\node[main] (a6) [draw] {$6$}; 
\node[main] (a1) [draw, below of=a6] {$1$};
\node[main] (a2) [draw, below of=a1] {$2$}; 
\node[main] (a3) [draw, right of=a1] {$3$};
\node[main] (a4) [draw, right of=a3] {$4$}; 
\node[main] (a5) [draw, above right of=a4] {$5$};
\node[main] (a7) [draw, below right of=a4] {$7$};

\node[main] (a8)  [draw, below right of=a5] {$8$};
\node[main] (a10) [draw, right of=a8] {$10$};
\node[main] (a9)  [draw, above of=a10] {$9$};
\node[main] (a11) [draw, below of=a10] {$11$};
\node[main] (a12) [draw, right of=a10] {$12$};

% solid lines for forward and dotted for backward arcs; can also use
% "densely dotted" or "loosely dotted"
\draw[red] (a6) -> (a3);
\draw[red, dotted] (a4) to [out=225,in=-45,looseness=1.5] (a3);
\draw[red] (a4) -> (a7);
\draw[red,dotted] (a2) -> (a7);
\draw[red] (a2) to (a1);
\draw[red,dotted] (a6) to [out=225,in=135,looseness=1.5] (a1);

\draw[blue] (a1) to [out=225,in=135,looseness=1.5] (a2);
\draw[blue, dotted] (a3) to  (a2);
\draw[blue] (a3) to [out=45,in=135,looseness=1.5] (a4);
\draw[blue,dotted] (a5) -> (a4);
\draw[blue] (a5) to (a6);
\draw[blue,dotted] (a1) to (a6);

% right side
\draw[green] (a7) to [out=-60,in=-90,looseness=1.8] (a12);
\draw[green, dotted] (a9) to (a12);
\draw[green] (a9) to [out=-60,in=45,looseness=1.5] (a10);
\draw[green,dotted] (a11) to [out=120,in=225,looseness=1.5] (a10);
\draw[green] (a11) to (a8);
\draw[green,dotted] (a7) to (a8);

\draw[magenta] (a8) to (a5);
\draw[magenta, dotted] (a12) to [out=90,in=45,looseness=1.8] (a5);
\draw[magenta] (a12) to (a11);
\draw[magenta,dotted] (a10) to (a11);
\draw[magenta] (a10) to (a9);
\draw[magenta,dotted] (a8) to (a9);

\end{tikzpicture}
  \caption{Strongly connected, odd, non-Hamiltonian 2-dd in $\fullG^{6,clean}_4$
    with minimal split set $\{5, 7\}$}
  \label{fig-ac4-nonH}
\end{figure}
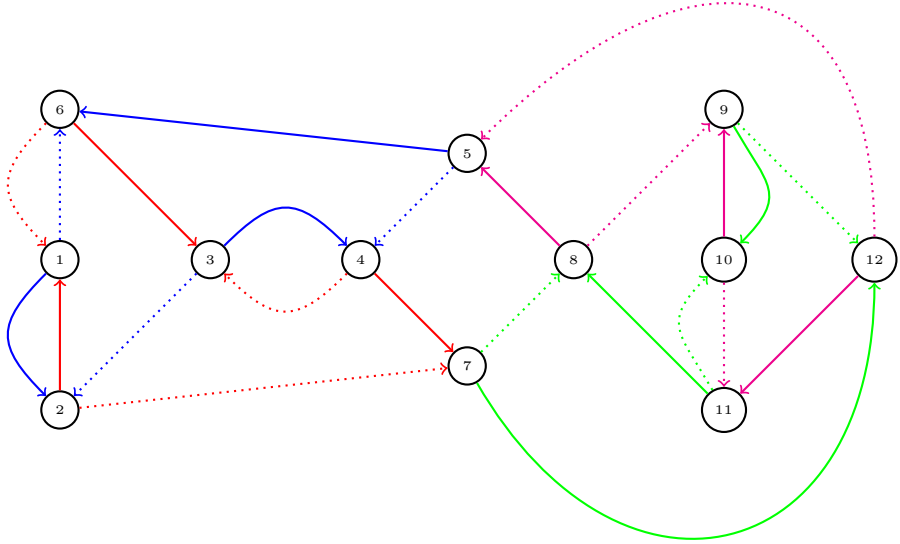

The non-Hamiltonicity of Figure 4(b) of [\ref{ref-ram}] can also be established by a
very similar argument.

For another illustration of the utility of Theorem \ref{thm-split2}, consider the family
$B^6_6$, which has, per the results of [\ref{ref-ramwalsh}, \ref{ref-ramwalsh-tech}]
218,161,485 graphs of which 212,785,467 are (strongly) connected; of these, 12,513 are
clean, odd and non-Hamiltonian. The non-Hamiltonicity of 11,320 of these
($\sim$90\%) can be established by the above Theorem: they are 2-splittable with both
split components even after the requisite splicing; only 1,193 are not amenable to this
criterion.

We can reverse this process to get an infinite sub-family of $\fullG_{odd}$ consisting of
strongly connected, odd, non-Hamiltonian 2-dds as follows: choose an arbitrary pair of
connected even graphs from $\fullG_{odd}$, split a single vertex in each and splice them
together in the obvious way; all such resulting graphs must be odd and non-Hamiltonian.

Though individual non-Hamiltonian regular connected graphs of low degree are known
(e.g. Tutte and Meredith graphs [\ref{ref-tutte}, \ref{ref-meredith}]) infinite
families of such graphs are not common in the literature; even scarcer are infinite
families of such digraphs.

As follows from Theorem 2 of [\ref{ref-ram}], the connected elements of $\fullG_{odd}$
can be partitioned into a pair of classes $\family{F}_1$ and $\family{F}_2$
of odd and even graphs; the latter is an infinite family of strongly connected
non-Hamiltonian 2-dds.

The result of [\ref{ref-trotter}] yields yet another such family consisting of 2-dds
that are products of directed cycles.

\section{Minors and Quotients}\label{sec-minors}
Suppose $G = (V, A, C) \in \partialG$, $K \subset C, K \neq \emptyset$. For any
{\bf open} route $r$ of $K$, we
define ${r \downarrow}$, the {\em minor defined by $r$}, as the minor obtained from $G$
as follows (let $F$ be any factor of $K$ with $r = r_F$):
\begin{enumerate}
\item Delete all arcs and saturated vertices of $K$.
  \label{minor-del-all}
\item Identify each pair $(u,v) \in r$ to form a new vertex $w$ (any in-arcs of $u$
  and out-arcs of $v$ in $\overline{K}$ are preserved as incident arcs of $w$).
  \label{minor-splice}
\item Finally, delete any isolated vertices in the resulting graph (cannot occur
  if $G \in \fullG$).
  \label{minor-del-isolated}
\end{enumerate}

\begin{prop}
  The following hold:
  \begin{enumerate}[topsep=0pt]
  \item ${r \downarrow} \in \partialG$. \label{prop-reduction-membership}
  \item $G \in \fullG \implies {r \downarrow} \in \fullG$.
     \label{prop-reduction-membershipF}
  \item ${r \downarrow}$ has $2^{|\overline{K}|}$ factors. \label{prop-reduction-factors}
  \end{enumerate}
\end{prop}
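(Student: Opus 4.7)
The plan is to handle all three items by tracking what the minor operation does to vertices, arcs, and the AC decomposition. Parts \ref{prop-reduction-membership} and \ref{prop-reduction-membershipF} reduce to a local case analysis of the vertices produced in step \ref{minor-splice}, while part \ref{prop-reduction-factors} follows immediately from Corollary (a) of Proposition \ref{prop-factor} once we establish that $C_{r\downarrow} = \overline{K}$.

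For \ref{prop-reduction-membership} I would classify every surviving vertex of $r\downarrow$. Vertices outside $V^K$ keep their $G$-degrees unchanged since no incident arc is touched, and vertices of $V^K_{sat}$ are deleted in step \ref{minor-del-all}. Each remaining identified vertex $w$ comes from some $(u,v) \in r$. Since $u \in V^K_{entry}$ has both out-arcs in $K$ and hence none in $\overline{K}$, and symmetrically $v \in V^K_{exit}$ has no in-arcs in $\overline{K}$, the in-arcs of $w$ are exactly the in-arcs of $u$ in $\overline{K}$ and its out-arcs are the out-arcs of $v$ in $\overline{K}$. Each of these counts lies in $\{0,2\}$ because $G \in \partialG$, giving one of the four possibilities $(0,0),(0,2),(2,0),(2,2)$; the first yields an isolated vertex removed in step \ref{minor-del-isolated}, and the other three are legitimate 2-digraph vertex types, so $r\downarrow \in \partialG$. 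For \ref{prop-reduction-membershipF}, when $G \in \fullG$ every $u$ and $v$ is saturated in $G$, forcing the $(2,2)$ case throughout, so all identified vertices are saturated and $r\downarrow \in \fullG$.

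For \ref{prop-reduction-factors}, Corollary (a) of Proposition \ref{prop-factor} gives $2^{|C_{r\downarrow}|}$ factors, so it suffices to prove $C_{r\downarrow} = \overline{K}$. The arcs of $r\downarrow$ coincide with the arcs of $\overline{K}$, and distinct ACs of $\overline{K}$ have disjoint arc sets, so no two of them can collapse into a single arc-sequence. I would then show that each $X \in \overline{K}$ remains an alternating cycle in $r\downarrow$: the AC condition requires consecutive arcs to share an end-vertex (for forward/backward pairs) or a start-vertex (for backward/forward pairs), and identification can only \emph{merge} vertices, so a shared endpoint in $\overline{K}$ is still shared in $r\downarrow$. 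The delicate point -- and the main obstacle -- is verifying that the identification does not introduce \emph{new} AC-adjacencies at a merged vertex $w$ that would fuse two distinct ACs of $\overline{K}$ into one. The resolution is the observation that at any 2-digraph vertex the AC structure pairs the (up to two) in-arcs via the shared end-vertex independently of the (up to two) out-arcs via the shared start-vertex, with no in-to-out crossover. Since $u$ contributes only in-arcs to $w$ and $v$ contributes only out-arcs, the in-pairing at $w$ is inherited entirely from $u$'s pairing in $\overline{K}$ and the out-pairing from $v$'s, so every $X \in \overline{K}$ survives as a single intact AC in $r\downarrow$. The ACs of $\overline{K}$ then partition the arcs of $r\downarrow$, and by uniqueness of the AC decomposition this is the whole decomposition, giving $|C_{r\downarrow}| = |\overline{K}|$ and hence $2^{|\overline{K}|}$ factors.
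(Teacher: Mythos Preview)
Your proof is correct and follows essentially the same approach as the paper: parts \ref{prop-reduction-membership} and \ref{prop-reduction-membershipF} are handled by the identical four-case analysis on the degrees of the identified vertex $w$, and part \ref{prop-reduction-factors} is reduced to the claim that the AC set of $r\downarrow$ is $\overline{K}$ together with Corollary (a) of Proposition \ref{prop-factor}. The only difference is that the paper simply asserts $C_{r\downarrow}=\overline{K}$ without argument, whereas you supply a careful justification via the ``in-arcs pair with in-arcs, out-arcs with out-arcs'' observation; this extra detail is sound and fills a gap the paper leaves implicit.
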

\begin{proof}
  Since the AC set of ${r \downarrow}$ is $\overline{K}$,
  \ref{prop-reduction-factors} is obvious from proposition \ref{prop-factor}.
  For \ref{prop-reduction-membership}, vertices not in $K$ are unaffected by the
  construction of the minor so we only need to show that the new vertex $w$ created in
  step \ref{minor-splice} satisfies the degree constraints of a 2-digraph. We
  distinguish four cases for the identified vertices $u$ and $v$:
  \begin{itemize}[topsep=0pt]
  \item Case A: $u$ and $v$ are also entry and exit vertices respectively of $G$.
    In this case, $w$ becomes an isolated vertex deleted in step
    \ref{minor-del-isolated}.
  \item Case B: $u$ and $v$ are both saturated vertices of $G$. Here,
    $w_{in} = w_{out} = 2$ in ${F\downarrow}$ since the two incoming arcs of $u$ and
    the two outgoing arcs of $v$ are preserved as incident arcs of $w$.
  \item Case C: $u$ is an entry vertex but $v$ is a saturated vertex of $G$.
    Here, $w_{in} = 0, w_{out} = 2$ in ${F\downarrow}$ since the two outgoing arcs of
    $v$ are preserved as incident arcs of $w$.
  \item Case D: $u$ is a saturated vertex but $v$ is an exit vertex of $G$. Follows
    by symmetry with the previous case.
  \end{itemize}
  This completes the proof of \ref{prop-reduction-membership}.
  If $G \in \fullG$, only case B is possible since all vertices of $G$ are
  saturated, so $w$ satifies the necessary degree constraint for ${r \downarrow}$
  to be in $\fullG$, so \ref{prop-reduction-membershipF} holds.
\end{proof}

The set of all such minors defined by all open routes of $K$ is called the
{\em K-quotient of $G$} and is denoted by:
\[
G/K = \{ {r \downarrow} \; | \; r \; \text{is an open route of} \; G^K\}
\]
If $K = \{X\}$, we use $G/X$ to mean $G/K$. We say that
$G/K$ is Hamiltonian iff it contains a Hamiltonian element.

The following remarks should now be obvious: \\
{\sl REMARKS:}\\
\begin{enumerate}[topsep=0pt]
\setcounter{enumi}{\theenumTemp}
\item $0 \leq |G/K| \leq 2^{|K|}$ (if many factors define the same route, $|G/K|$ will be
  considerably smaller than $2^{|K|}$)
\item $G/K$ may contain isomorphic copies.
\item \label{closed-ac} $G/K$ is empty iff $K$ is closed.
\setcounter{enumTemp}{\theenumi}
\end{enumerate}

Consider a minor $M \in G/K, M = {r \downarrow}$ for some open route $r$ of $K$ and $F$
a factor of $K$ with $r_F = r$. A factor $J$ of $M$ can be mapped to a factor $J'$ of $G$
as follows: Select the same arcs as $F$ from $K$ and the same arcs from $\overline{K}$ as
$J$ (this is possible since the AC-set of $M$ is $\overline{K}$). It is easy to see that
$J'$ is a factor of $G$ and $i(J) = i(J')$.

Conversely, given a factor $L$ of $G$ which selects the same arcs from $K$ as $F$, it
can be mapped to a factor $L'$ of $M$ obtained by selecting the same arcs from
$\overline{K}$ as $L$. Here again it is obvious that $i(L) = i(L')$.
We have just proved:

\begin{prop} \label{prop-reduction}
  Suppose $K \subset C, K \neq \emptyset$, $F$ an open factor that defines the open
  route $r$ of $K$. Let $P$ be the
  set of factors of ${r \downarrow{}}$ and $Q$ the set of factors of $G$ that coincide
  with $F$ in $K$. Then, there is a unique bijection: $f: P \rightarrow Q$ with $J$ and
  $f(J)$ coinciding in $\overline{K}$ and $c(J) = c(f(J))$, $i(J) = i(f(J))$.
\end{prop}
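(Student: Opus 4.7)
The plan is to construct the map $f$ explicitly, use Proposition \ref{prop-factor} to verify that it is a bijection from $P$ to $Q$, and then argue that a contraction-expansion along the $F$-paths in $K$ preserves the decomposition of a factor into cycles and simple paths.

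Given $J \in P$, define $f(J)$ as the spanning subgraph of $G$ whose arc set is $(F \cap A^K) \cup (\text{arcs of } J)$; note that $J$'s arcs all lie in $A^{\overline{K}}$ since the AC-set of ${r\downarrow}$ is $\overline{K}$. By Proposition \ref{prop-factor}, $F$ selects one of $X_f, X_b$ from each $X \in K$ and $J$ selects one of $Y_f, Y_b$ from each $Y \in \overline{K}$; together this gives a valid per-AC selection across all of $C$, so $f(J)$ is a factor of $G$ and agrees with $F$ on $K$, i.e. $f(J) \in Q$. Conversely, for $L \in Q$, the arcs of $L$ lying in $\overline{K}$ constitute a valid per-AC selection inside ${r\downarrow}$, yielding a factor $g(L)$ of ${r\downarrow}$; the maps $f$ and $g$ are mutually inverse by construction, so $f$ is a bijection, and $J$ agrees with $f(J)$ on $\overline{K}$ by definition.

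For the preservation of $c$ and $i$, recall that $F$ is open, so $F \cap A^K$ is a disjoint union of simple paths $P_u$, one from each $u \in V^K_{entry}$ to $v = r(u) \in V^K_{exit}$, with no cycles. In the minor, each $P_u$ is collapsed to a single identified vertex $w_u$. A component $\gamma$ of $J$ (a cycle or a simple path in ${r\downarrow}$) lifts to a component $\hat\gamma$ of $f(J)$ by replacing each visit to a $w_u$ with the corresponding $P_u$; distinct components lift to arc-disjoint and vertex-disjoint components, a cycle lifts to a cycle, and a path lifts to a path of the same endpoints. Conversely, every component of $f(J)$ either avoids $K$ entirely (matching a component of $J$ that avoids identified vertices) or alternates between maximal segments in $\overline{K}$ and maximal subpaths of $F$, each of the latter contracting to some $w_u$. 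This yields a bijection between the components of $J$ and those of $f(J)$ that preserves the cycle/path distinction, whence $c(J) = c(f(J))$ and $i(J) = i(f(J))$.

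The main obstacle is justifying the alternation picture rigorously: one must verify that whenever $f(J)$ enters an $F$-path $P_u$ at its endpoint $u$, it must traverse $P_u$ in its entirety and exit at $r(u)$. This follows because at every internal saturated vertex of $P_u$, the factor $F$ already specifies the unique in-arc and out-arc used there, so the traversal of $P_u$ continues deterministically once entered; and because $F$ is open, the contraction cannot absorb or create any extra cycles. Once this deterministic traversal is established, the component-wise bijection transfers the counts cleanly and both equalities follow.
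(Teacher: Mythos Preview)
Your proof is correct and follows essentially the same approach as the paper: construct $f(J)$ by taking $F$'s arcs on $K$ and $J$'s arcs on $\overline{K}$, invoke Proposition~\ref{prop-factor} for the bijection, and argue that contracting each open $F$-path in $K$ to a single vertex sets up a component-by-component correspondence preserving cycles and paths. The paper's own argument is in fact terser than yours---it asserts that $i(J)=i(J')$ is ``easy to see'' without spelling out the path-lifting and deterministic-traversal justification you supply---so your version fills in exactly the details the paper leaves implicit.
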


This proposition yields the following theorem as an easy consequence:
\begin{theorem} Suppose $G = (V, A, C) \in \fullG$.\\ \label{thm-reduction}
  \begin{enumerate}[(a),topsep=0pt]
    \item If $G$ is Hamiltonian, so is $G/K$ for every $K \subset C, K \neq \emptyset$.
      Conversely, if $G/K$ is Hamiltonian for {\bf some} $K \subset C, K \neq \emptyset$,
      $G$ Hamiltonian. \label{thm-reduction-a}
    \item If $K \subset C$ has a unique open route $r$, the minor ${r \downarrow{}}$ is
      Hamiltonian-equivalent to $G$. \label{thm-reduction-b}
    \item Any $G \in \fullG^6$ that contains $X^c_{L2}$ or $X^c_{L1S1}$ is non-Hamiltonian.
      \label{thm-reduction-c}
  \end{enumerate}
\end{theorem}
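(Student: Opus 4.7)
The plan is to prove (a) directly using Proposition \ref{prop-reduction} and then obtain (b) and (c) as essentially immediate consequences. For the forward direction of (a), let $L$ be a Hamiltonian cycle of $G$. By Proposition \ref{prop-factor}, the restriction of $L$ to $K$ selects either $X_f$ or $X_b$ from each $X \in K$ and so is a factor $F$ of $G^K$. The key step is to argue that $F$ is open: any cycle $\gamma$ in $F$ would also be a cycle-subgraph of the simple cycle $L$, forcing $\gamma = L$ (a simple cycle has no proper cycle subgraph), which would place all arcs of $L$ in $A^K$; but $K \subsetneq C$ yields some $X_0 \in C \setminus K$, and Proposition \ref{prop-factor} forces $L$ to use $X_{0,f}$ or $X_{0,b}$, a contradiction. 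Hence $r = r_F$ is an open route, $r\downarrow \in G/K$, and Proposition \ref{prop-reduction} pulls $L$ back through the index-preserving bijection to a factor of $r\downarrow$ of index $1$, which in the 2-dd $r\downarrow$ is a Hamiltonian cycle. The converse of (a) runs the same bijection in reverse: any index-one factor of any $r\downarrow \in G/K$ pushes forward to a Hamiltonian factor of $G$.

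Part (b) is then immediate: a unique open route $r$ of $K$ makes $G/K$ the singleton $\{r\downarrow\}$, so ``$G/K$ Hamiltonian'' and ``$r\downarrow$ Hamiltonian'' are identical statements, and (a) supplies the H-equivalence with $G$.

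For part (c), I take $K = \{X\}$ with $X \in \{X^c_{2L}, X^c_{1L1S}\}$. A direct inspection of Figure \ref{fig-ac6} shows that both factor choices $X_f$ and $X_b$ of $G^X$ contain a short cycle --- two self-loops for $X^c_{2L}$, and a self-loop together with a 2-cycle for $X^c_{1L1S}$ --- so $G^X$ is closed (as also recorded in Table \ref{tab-ac}). Remark \ref{closed-ac} then gives $G/X = \emptyset$. Since $X$ carries entry and exit vertices it cannot itself be a 2-dd, hence $K = \{X\} \subsetneq C$ and part (a) applies: if $G$ were Hamiltonian, $G/X$ would have to contain a Hamiltonian element, contradicting emptiness.

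The one genuinely delicate step is the openness argument in part (a); everything else is either a direct appeal to Proposition \ref{prop-reduction} or a one-line inspection of the diagrams.
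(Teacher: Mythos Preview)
Your proof is correct and follows essentially the same route as the paper's: both derive (a) from Proposition~\ref{prop-reduction}, read off (b) as the singleton case, and obtain (c) from Remark~\ref{closed-ac}. Your version is in fact more complete --- the paper simply asserts that the restriction of a Hamiltonian cycle to $K$ ``must be open'' without argument, whereas you supply the justification, and you also make explicit in (c) that $\{X\} \subsetneq C$ so that (a) actually applies.
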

\begin{proof}
  If $H$ is a Hamiltonian cycle of $G$, its restriction $F$ to $K$ must be
  open; hence, $r \downarrow$ exists and $f^{-1}(H)$ is the corresponding Hamiltonian
  cycle in $r \downarrow$. Conversely, if $H$ is Hamiltonian cycle of $r \downarrow$,
  $f(H)$ is a Hamiltonian cycle in $G$ so \ref{thm-reduction-a} is proved.

  \ref{thm-reduction-b} follows from \ref{thm-reduction-a} since $G/K$ is singleton.

  \ref{thm-reduction-c} is obvious since $G/X$ is empty for each of these ACs by remark
  \ref{closed-ac}.
\end{proof}

We now prove a necessary condition for an odd element of $\fullG^6$ to be
non-Hamiltonian:
\begin{prop} \label{prop-non-ham}
  Suppose $G = (V, A, C) \in \fullG^6$ is connected, odd and non-Hamiltonian.
  Then, for any open AC $X \in C$, $\overline{X}$ is closed.
\end{prop}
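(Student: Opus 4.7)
The plan is to argue by contradiction: assume $\overline{X}$ is open, pick any open factor $F_1$ of $\overline{X}$, and combine $F_1$ with each of the two ``halves'' $X_f, X_b$ of $X$ to build two factors of $G$ whose indices cannot both be odd and different from $1$.

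First I will establish an index formula for the combined factors. Since $V^{\overline{X}}_{entry} = V^X_{exit}$ and $V^{\overline{X}}_{exit} = V^X_{entry}$, the route $\rho := r_{F_1}$ is a bijection $V^X_{exit} \to V^X_{entry}$, and by Proposition~\ref{prop-factor} $F_1 \cup F_X$ is a factor of $G$ for each $F_X \in \{X_f, X_b\}$. Because $F_1$ is cycle-free, every cycle of $F_1 \cup F_X$ either lies entirely in $V^X_{sat}$ (and is then a cycle of $F_X$) or alternates a path of $F_X$ from some $u \in V^X_{entry}$ to $r_{F_X}(u)$ with a path of $F_1$ from $v \in V^X_{exit}$ to $\rho(v)$; the latter ``glued'' cycles are in bijection with the cycles of the permutation $\rho \circ r_{F_X}$ on $V^X_{entry}$, giving
\[
i(F_1 \cup F_X) \;=\; i(F_X) \;+\; (\text{number of cycles of } \rho \circ r_{F_X}).
\]

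Next I apply the hypotheses on $G$. Since $X$ is open, at least one of $X_f, X_b$ is itself cycle-free; after possibly swapping the arbitrary forward/backward labelling, call this half $X_f$, so $i(F_1 \cup X_f)$ equals the number of cycles of $\rho \circ r_{X_f}$. Writing $n := |V^X_{entry}|$, this count lies in $\{1, \dots, n\}$ and must be odd (as $G$ is odd) and different from $1$ (as $G$ is non-Hamiltonian), hence at least $3$. Thus $n \ge 3$, and inspecting Table~\ref{tab-ac} the only open AC in $\partialG^6_1$ with $n = 3$ is $X_{clean}$; the table also records that $X_{clean}$ has two distinct open routes $r_{X_f} \ne r_{X_b}$, with neither half containing a cycle.

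Finally I eliminate $X \cong X_{clean}$ by applying the formula to the other half: $i(F_1 \cup X_b)$ equals the number of cycles of $\rho \circ r_{X_b}$, which must again be odd, different from $1$, and at most $n = 3$, hence exactly $3$. Since the identity is the only permutation of a three-element set with three cycles, both $\rho \circ r_{X_f}$ and $\rho \circ r_{X_b}$ are the identity on $V^X_{entry}$, forcing $r_{X_f} = r_{X_b}$ and contradicting their distinctness. The technical heart of the argument is the index formula, and in particular the bijective identification of the glued cycles of $F_1 \cup F_X$ with the cycles of the permutation $\rho \circ r_{F_X}$; everything afterward reduces to a short case-check against Table~\ref{tab-ac}.
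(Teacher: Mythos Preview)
Your proof is correct, and the overall contradiction strategy (take an open factor of $\overline{X}$ and glue it to each of $X_f,X_b$) matches the paper's, but the organization is genuinely different. The paper runs five separate cases, one for each open AC type in $\partialG^6_1$, and in each case argues by hand that the glued factor is either Hamiltonian or even. You instead isolate the general identity
\[
i(F_1\cup F_X)\;=\;i(F_X)\;+\;\#\{\text{cycles of }\rho\circ r_{F_X}\},
\]
which lets you discard all four types with $n=|V^X_{entry}|\le 2$ in a single stroke (the cycle count is at most $n$, but must be odd and $\ne 1$), and then dispatch $X_{clean}$ by noting that both compositions $\rho\circ r_{X_f}$ and $\rho\circ r_{X_b}$ are forced to be the identity on a three-element set, hence $r_{X_f}=r_{X_b}$. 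The paper's version is more hands-on and requires no auxiliary formula; yours is cleaner, avoids the explicit six-scenario check inside Case~E, and would scale to $\fullG^{2k}$ for larger $k$, where the list of open AC shapes grows quickly.
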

\begin{proof}
  Let $X \in C$ be an arbitrary open AC with six arcs
  and assume that $\overline{X}$ is open. Clearly $X \notin \{X^c_{2L}, X^c_{1L1S}\}$
  since both of those are closed.\\
  {\bf Case A}: $X = X_{2S}$. Any open factor of $\overline{X}$ must be a single
  Hamiltonian path from the unique entry vertex to the unique exit vertex; by
  appending either of the two open paths formed by $X_f$ and $X_b$ to this path
  we get a Hamiltonian circuit of $G$ which is a contradiction.\\
  {\bf Case B}: $X = X_{2L}$. Similarly, any open factor of $\overline{X}$ must
  be a single Hamiltonian path from the unique entry vertex to the unique exit
  vertex; by appending the single open path in $X$ (i.e. the solid arcs) to this
  path we get a Hamiltonian circuit of $G$ which is a contradiction.\\
  {\bf Case C}: $X = X_{1S}$ Here, $\overline{X}$ has two entry vertices $\{5, 2\}$ and
  two exit vertices $\{4, 3\}$, so an open factor must consist of
  two simple paths $5 \rightsquigarrow 4$ and $2 \rightsquigarrow 3$ or vice versa.
  In one case, adding either $X_f$ or $X_b$ yields an even factor of $G$ and, in
  the other case, a Hamiltonian circuit both of which contradict our assumptions.\\
  {\bf Case D}: $X = X_{1L}$. This case is similar to the previous case.\\
  {\bf Case E}: $X = X_{clean}$. Here, $\overline{X}$ has three entry vertices
  $\{2, 4, 6\}$ and three exit vertices $\{1, 3, 5\}$, so an open factor must
  consist of three simple paths. In each of the six possible scenarios, adding
  either $X_f$ or $X_b$ will yield the desired contradiction. For example, paths
  $2 \rightsquigarrow 5$, $4 \rightsquigarrow 1$ and $6 \rightsquigarrow 3$ yield
  Hamiltonian circuits when combined with either $X_f$ or $X_b$.
\end{proof}

Theorem \ref{thm-reduction} and proposition \ref{prop-non-ham} above allow us to state
the following necessary and sufficient condition for non-Hamiltonicity of odd elements of
$\fullG^6$:
\begin{theorem} \label{thm-non-ham}
  Suppose $G = (V, A, C) \in \fullG^6$ is odd. $G$ is non-Hamiltonian iff for
  some $K \subset C, K \neq \emptyset$, $K$ is closed.
\end{theorem}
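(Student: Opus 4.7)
The plan is to prove the two directions separately. The reverse direction will follow quickly from the quotient machinery of Theorem \ref{thm-reduction}(a), and the forward direction will combine Proposition \ref{prop-non-ham} with a short case analysis that handles the degenerate and disconnected cases.

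For ($\Leftarrow$), I would assume some non-empty proper $K \subset C$ is closed. Every factor of $K$ then contains at least one cycle, so $K$ admits no open route, and by remark \ref{closed-ac} the quotient $G/K$ is empty. In particular $G/K$ contains no Hamiltonian element, and Theorem \ref{thm-reduction}(a) immediately forces $G$ to be non-Hamiltonian.

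For ($\Rightarrow$), I would assume $G$ non-Hamiltonian and split into cases. First, the edge case $|C|=1$: here $G$ would be a single saturated six-arc AC, so by Table \ref{tab-ac} it is one of $X^c_{2L1S}$, $X^c_{3L}$, $X^c_{3S}$. A direct inspection of the two factors of each shows that $X^c_{2L1S}$ has only even-index factors (hence is excluded from $\fullG_{odd}$) while $X^c_{3L}$ and $X^c_{3S}$ are both Hamiltonian (their solid factor is a $3$-cycle), contradicting the assumption; so $|C| \geq 2$. Next, if $G$ is disconnected, I would let $H$ be any connected component and take $K$ to be the set of ACs lying entirely within $H$: since $H$ is itself a 2-dd, every factor of $G^K$ is a disjoint union of cycles and hence closed, so $K$ is a non-empty proper closed subset of $C$. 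Finally, if $G$ is connected with $|C|\geq 2$, I would pick any $X \in C$: if $X$ is closed then $K = \{X\}$ already works, and if $X$ is open then Proposition \ref{prop-non-ham} yields that $\overline{X}$ is closed, giving $K = \overline{X}$, which is non-empty and proper since $|C|\geq 2$.

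The only mildly subtle point is ensuring that the hypotheses of Proposition \ref{prop-non-ham} (connectedness plus the existence of an open AC) are correctly arranged, which is precisely why the $|C|=1$ and disconnected cases are peeled off first. Once the case split is in place, the argument reduces entirely to remark \ref{closed-ac} and Theorem \ref{thm-reduction}(a), and no further ideas are needed.
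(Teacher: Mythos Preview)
Your proof is correct and follows exactly the approach the paper intends: the reverse direction via remark~\ref{closed-ac} and Theorem~\ref{thm-reduction}\ref{thm-reduction-a}, and the forward direction via Proposition~\ref{prop-non-ham}. The paper itself gives no explicit proof, merely asserting the theorem as a consequence of those two results; your write-up is more careful in that it explicitly disposes of the $|C|=1$ and disconnected cases so that the hypotheses of Proposition~\ref{prop-non-ham} are genuinely in force when it is invoked.
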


The following result applies Theorem \ref{thm-reduction} to $\fullG^6$ to
show that, in the context of Hamiltonicity, dirty ACs can be eliminated.

\begin{prop}
  Suppose $G = (V, A, C) \in \fullG^6$ and $|C| > 1$. If
  $X \in \{X_{1L}, X_{1S}, X_{2L}, X_{2S}\}$ is an AC of $G$,
  $G/X$ is a singleton and its sole member is Hamiltonian-equivalent to $G$.
\end{prop}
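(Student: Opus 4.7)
The plan is a direct application of Theorem~\ref{thm-reduction}(b). Since $|C|>1$, the singleton $K=\{X\}$ is a proper subset of $C$, so the quotient $G/X$ is well defined and Theorem~\ref{thm-reduction}(b) becomes applicable as soon as one shows that $G^{\{X\}}$ admits a unique open route. Once this is in place, $G/X$ is automatically a singleton and its sole member is Hamiltonian-equivalent to $G$, which is exactly what the proposition asserts.

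The uniqueness of the open route for each of the four dirty AC-types is already recorded by the ``open factors, routes'' column of Table~\ref{tab-ac}: the second entry there is $1$ in every case. Re-checking it directly is a short inspection of Figure~\ref{fig-ac6}, since $G^{\{X\}}$ has only two factors, $X_f$ and $X_b$. For $X_{1L}$ and $X_{2L}$, the backward factor $X_b$ (the dotted arcs) contains at least one self-loop and is therefore closed; only $X_f$ is open, so it supplies the unique open route. For $X_{1S}$ and $X_{2S}$ both factors are loop-free and open, and one must verify that they induce the same bijection $V^X_{entry}\to V^X_{exit}$. In $X_{2S}$ this is automatic because there is only one entry vertex and only one exit vertex. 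In $X_{1S}$, with entries $\{3,4\}$ and exits $\{2,5\}$, $X_f$ decomposes as the paths $3\to 1\to 2$ and $4\to 5$ while $X_b$ decomposes as $3\to 2$ and $4\to 1\to 5$; both determine the bijection $3\mapsto 2,\ 4\mapsto 5$.

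With the unique open route $r$ in hand, $G/X=\{r{\downarrow}\}$ by the definition of the quotient, and Theorem~\ref{thm-reduction}(b) then delivers the Hamiltonian-equivalence of $r{\downarrow}$ with $G$, completing the proof.

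The only genuinely non-formal step in this plan is the route-coincidence check for $X_{1S}$ and $X_{2S}$: two open factors could a priori induce two different bijections, so the equality has to be extracted from the explicit structure of these ACs in Figure~\ref{fig-ac6} rather than forced by a mere count of open factors; the other three cases are essentially immediate from the presence of a self-loop or from having a single entry/exit vertex.
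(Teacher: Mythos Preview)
Your proof is correct and follows the same approach as the paper's own proof: both observe that each of the four AC types has a unique open route (the paper just asserts this, implicitly deferring to Table~\ref{tab-ac}, while you additionally spell out the case-by-case verification) and then invoke Theorem~\ref{thm-reduction}\ref{thm-reduction-b}. Your added verification for $X_{1S}$---that the two distinct open factors $X_f$ and $X_b$ nonetheless determine the same bijection $3\mapsto 2,\ 4\mapsto 5$---is a genuine detail the paper leaves to the table, but it does not constitute a different method.
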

\begin{proof} All four ACs have a unique open route and so, in each case, $G/X$ is
  singleton; the result now follows from theorem \ref{thm-reduction}.
\end{proof}

In an algorithmic context, it is important to note that the unique minor in the quotient
may:
\begin{enumerate}[(a),topsep=0pt]
\item be disconnected.
\item contain a dirty AC that was clean in $G$.
\item contain $X^c_{L2}$ or $X^c_{L1S1}$.
\end{enumerate}
In the first case, we can immediately conclude that $G$ is non-Hamiltonian; likewise in
the last case using Theorem \ref{thm-reduction}\ref{thm-reduction-c}.
So, if $G$ is connected and dirty, we can repeatedly apply this proposition to eliminate
all dirty ACs and end up either with a connected clean graph $G'$ that is
Hamiltonian-equivalent to $G$ or conclude that $G$ is non-Hamiltonian.

\section{Closed graphs}\label{sec-closed}
In the light of Theorem \ref{thm-non-ham}, closed 2-digraphs are useful in
determining non-Hamiltonicity of 2-dds so in this section, we deduce some properties of
such graphs.

Let $\family{C}^6 \subset \partialG^6$ denote the sub-family all of whose graphs are
minimally closed, connected, and clean. We show that a substantial percentage of
the vertices of graphs in $C^6$ must be saturated.

Suppose $G = (V, A, C) \in \partialG$.

The following remarks should be obvious since every factor $F$ of $G$, when restricted
to a subgraph $H$ of $G$, yields a factor of $H$:\\
{\em REMARKS:}
\begin{enumerate}[topsep=0pt]
  \setcounter{enumi}{\theenumTemp}
  \item $G$ is open iff every subgraph of $G$ is open.
  \item \label{rem-closed-component} $G$ is closed iff at least one component of $G$ is
    closed.
  \item $i(G) = \sum_{H} i(H)$ where the sum is over all components $H$ of $G$.
  \setcounter{enumTemp}{\theenumi}
\end{enumerate}

\begin{prop} \label{lem-closed-connected}
  Suppose $G$ is connected and minimally closed. Then:
  \begin{enumerate}[(a),topsep=0pt]
  \item\label{prop-subgraph-open} Every proper subgraph of $G$ must be open.
  \item \label{prop-singleton-split} $G$ cannot have a singleton split set.
  \end{enumerate}
\end{prop}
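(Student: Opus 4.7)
The plan for both parts is proof by contradiction, exploiting the fact that minimal closure forces every vertex-splitting to destroy closedness.

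For (a), suppose some proper subgraph $G^K$ (with $K \subsetneq C$, $K \neq \emptyset$) were closed. I would produce a saturated vertex $v$ of $G$ that lies on no cycle of $G^K$; splitting such a $v$ preserves the cycle structure of $G^K$ and hence keeps $G$ closed, contradicting minimality. To produce $v$, I would argue that some saturated vertex must have an arc in $\overline{K}$: otherwise every arc of $\overline{K}$ would be incident only to unsaturated vertices of $G$, and since each such vertex has both of its arcs in a single AC, every $Y \in \overline{K}$ would be an isolated connected component of $G$, contradicting connectedness combined with $K \neq \emptyset$. Given that each saturated vertex is either internal to a single AC or a boundary vertex of exactly two ACs, the chosen $v$ either sits entirely outside $V^K$, or appears in $G^K$ with both of its $K$-arcs being consecutive in a single AC of $K$ --- hence both in-arcs or both out-arcs at $v$ --- making $v$ an entry or exit vertex of $G^K$. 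In both cases $v$ lies on no cycle of $G^K$, so $G'^K$ after the split has the same cycle structure, remains closed, and thus (by restricting factors of $G'$ to $K$) forces $G'$ to be closed.

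For (b), I would suppose $\{v\}$ is a singleton split set. Splitting $v$ yields components $H_1 \ni v^{in}$ and $H_2 \ni v^{out}$, and by minimal closure the resulting $G'$ admits an open factor $F'$. Reidentifying $v^{in}$ and $v^{out}$ back to $v$ produces a factor $F$ of $G$ with the same arc selections as $F'$. Any cycle of $F$ passing through $v$ would unfold in $F'$ into a simple path from $v^{out}$ to $v^{in}$, which is impossible since these endpoints lie in different components of $G'$. Thus $F$ has no cycle, $F$ is open, and this contradicts the fact that $G$ is closed.

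The primary obstacle is the existence argument in (a): pinning down a saturated vertex with at least one arc outside $K$ and then verifying, via the AC structure, that it avoids every cycle of $G^K$. Once that vertex is located, both parts reduce to straightforward bookkeeping about how splitting and identification interact with arcs and cycles.
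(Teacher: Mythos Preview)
Your proof is correct and close to the paper's in spirit. For (a), both you and the paper locate a saturated vertex whose arcs are not all in $K$ and observe that splitting it leaves $G^K$ unchanged (hence closed), contradicting minimality; you supply considerably more justification---the existence argument via connectedness and the AC case analysis showing $v$ is an entry/exit vertex of $G^K$---than the paper's one-line ``split a boundary vertex of $K$''. For (b), your argument is actually a bit more direct: you invoke minimal closure immediately to obtain an open factor $F'$ of the split graph $G'$, whereas the paper routes through part (a), concluding that each split component is open and then assembling an open factor of $G$ from open factors of the two pieces. One small gap in your (b): you explicitly rule out cycles of $F$ passing through $v$, but you should also state that any cycle of $F$ avoiding $v$ would already be a cycle of $F'$, which is impossible since $F'$ is open.
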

\begin{proof}
  For \ref{prop-subgraph-open}, if a proper subgraph $K \subset C$ is closed, we can
  split a boundary vertex of K and still have a closed graph which contradicts the
  assumption that $G$ is minimally closed.

  For \ref{prop-singleton-split}, if $S = \{u\}$ is a singleton split set, splitting $u$
  yields split components $G_1, G_2$ containing respectively $u^{in}$ and $u^{out}$. Now, no
  simple cycle of $G$ can contain $u$ since, once it traverses $u$ from $G_1$ to $G_2$, it
  has no way to return to the starting vertex. By \ref{prop-subgraph-open}, neither $G_1$
  nor $G_2$ can be closed; but this means $G$ is open which is a contradiction.
\end{proof}

\begin{lemma} \label{lem-closed-saturated}
  Let $G \in \family{C}^6$, and $X$ an AC of $G$. We must have $|X^b| >= 4$.
\end{lemma}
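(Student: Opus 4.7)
The plan is to proceed by contradiction: suppose $|X^b| \le 3$ and construct an open factor of $G$, contradicting closedness.

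First, I would verify that $|C| \ge 2$. If $|C| = 1$ then $G = X$ is a single six-arc AC; since $G$ is clean, $X$ must be $X_{clean}$ (the only clean member of $\partialG^6_1$ by Table~\ref{tab-ac}), but that graph is open --- contradiction. So $\overline{X} = G^{C\setminus\{X\}}$ is a non-empty proper subgraph, and Proposition~\ref{lem-closed-connected}(\ref{prop-subgraph-open}) yields that $\overline{X}$ is open, so it admits an open factor $F'$ consisting of vertex-disjoint paths from $V_{entry}(\overline{X})$ to $V_{exit}(\overline{X})$.

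Next I would fix the labeling of $X$ as $X_{clean}$ from Figure~\ref{fig-ac6}: entry vertices $\{1,3,5\}$, exit vertices $\{2,4,6\}$, with matchings
\[ X_f = \{1{\to}2,\; 3{\to}4,\; 5{\to}6\}, \qquad X_b = \{3{\to}2,\; 5{\to}4,\; 1{\to}6\}. \]
Set $A = X^b \cap \{1,3,5\}$ and $B = X^b \cap \{2,4,6\}$, so that $|X^b| = |A|+|B|$ (since $G$ clean forces $V^i = \emptyset$, so $X^b$ is simply the set of $G$-saturated vertices of $X$). The pivotal observation is that any vertex of $X$ not in $X^b$ is unsaturated in $G$, so has in-degree $0$ or out-degree $0$ in $G$, and hence cannot lie on any cycle of any factor of $G$. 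Consequently an arc $(u,v) \in X_f \cup X_b$ can occur on a cycle of $F' \cup X_f$ or $F' \cup X_b$ only if $u \in A$ and $v \in B$; call such an arc a \emph{connector}.

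The plan is now to rule out every configuration with $|A|+|B| \le 3$. If $|A| = 0$ or $|B| = 0$, then $X_f$ has no connector, so $F' \cup X_f$ has no cycles at all and is an open factor of $G$ --- contradiction. Otherwise $(|A|,|B|) \in \{(1,1),(2,1),(1,2)\}$, and each of $X_f, X_b$ contains at most one connector. A single connector $(u,v)$ yields a cycle in the spliced factor exactly when $F'$ contains a path $v \rightsquigarrow u$. The matching structure of $X_{clean}$ --- each exit is the target of exactly one $X_f$-arc and one $X_b$-arc, from distinct sources --- forces, in every configuration where both $X_f$ and $X_b$ possess a connector, those two connectors to share the unique vertex in $B$ (when $|B|=1$) or the unique vertex in $A$ (when $|A|=1$). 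Closedness of $G$ would then require $F'$ to contain two distinct paths sharing a common start vertex (or common end vertex), which is impossible in a 1-digraph. Thus for any open factor $F'$ of $\overline{X}$, at least one of $F' \cup X_f$, $F' \cup X_b$ is open, completing the contradiction.

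The main obstacle is this last combinatorial check: verifying, purely from the two matchings above, that under $|A|+|B| \le 3$ any scenario in which both $X_f$ and $X_b$ carry a connector must force those connectors to share an endpoint in $A \cup B$. This is a finite, systematic sweep over the nine pairs in $\{1,3,5\} \times \{2,4,6\}$ restricted to the $(2,1)$ and $(1,2)$ subcases, and the hinge of the argument is the exit-based observation that $X_f^{-1}(j) \ne X_b^{-1}(j)$ for every exit $j$, which makes the ``both connectors present'' subcase force $A$ (or $B$) to consist precisely of the two such preimages, instantly producing the forbidden pair of paths.
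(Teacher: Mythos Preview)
Your argument is correct and complete, but it takes a genuinely different route from the paper's proof.

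The paper proceeds \emph{constructively}: it first picks a saturated vertex $u$ of $X$ and uses minimal closedness directly --- splitting $u$ opens $G$ --- to obtain a factor $F$ of $G$ whose \emph{unique} cycle $D$ passes through $u$. Taking $u=1$ and $F\supseteq X_f$, the arc $(1,2)$ lies on $D$, so $\{1,2\}\subseteq X^b$ already. Then the paper swaps $X_f$ for $X_b$ to get a second factor $F'$; since $F'$ is closed and any cycle of $F'$ disjoint from $X$ would also be a cycle of $F$ other than $D$, every cycle of $F'$ meets $X_b$. Tracing the possibilities yields two further saturated vertices (either $\{3,6\}$ or $\{4,5\}$). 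The paper tacitly passes over the case where $D$ uses more than one arc of $X_f$, which is harmless since then $D$ already visits at least four vertices of $X$.

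Your approach instead invokes minimal closedness only through Proposition~\ref{lem-closed-connected}\ref{prop-subgraph-open}, to secure an open factor $F'$ of $\overline{X}$, and then runs a direct contradiction via a clean case analysis on the ``connector'' arcs. This is more systematic and makes the role of the two complementary matchings on $X_{clean}$ fully explicit; in particular your observation that $X_f^{-1}(j)\neq X_b^{-1}(j)$ (and dually $X_f(i)\neq X_b(i)$) is exactly what drives the $(2,1)$ and $(1,2)$ subcases. The paper's argument is terser and exhibits the four saturated vertices concretely, whereas yours never names them but shows more transparently why three cannot suffice.
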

\begin{proof} 
  Let $u \in G_{sat}$ be an entry vertex of $X$. Since splitting $u$ creates an open
  factor, we must have a factor $F$ where the cycle $D$ containing $u$ is the only
  cycle in it. Assume wlog that $D$ traverses the arc $(u,v) \in X_f$; using $X_{clean}$
  from Figure \ref{fig-ac6} for reference, assume that this is the arc $(1, 2)$.
  Consider the factor $F'$ obtained by replacing $X_f$ with $X_b$; $D$ becomes
  $D' = (3, 2, ... 1, 6)$. Since $F'$ must be closed and it cannot have a circuit
  disjoint from $X$, we have only two possibilities: either $D'$ is part of a difactorial
  circuit (meaning there is difactorial path from $6$ to $3$), or we have a circuit
  containing the arc $(5,4)$. In both cases we have the desired four saturated vertices
  in $X$ (either $\{1,2,3,6\}$ or $\{1,2,4,5\}$). A parallel argument applies in the case
  when $u$ is an exit vertex of $X$.
\end{proof}

\begin{theorem} \label{theorem-sat}
If $G \in \family{C}^6$ has $m$ ACs, $|V_{sat}| >= 2m$.
\end{theorem}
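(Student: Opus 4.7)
The plan is to combine Lemma \ref{lem-closed-saturated} with a straightforward double-counting argument over the incidences between saturated vertices and ACs.

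First I would observe that, because $G$ is clean, every saturated vertex of $G$ lies in the boundary set $V^b$ rather than in any $V^i$. Next, I would verify the local structural fact that each saturated vertex $v \in V_{sat}$ belongs to exactly two distinct ACs. The reason: the two in-arcs at $v$ are always consecutive inside a single AC (they form an ``end-vertex pair'' in the AC sequence), and likewise the two out-arcs at $v$ lie together inside a single AC. If both pairs belonged to the same AC, $v$ would be an internal vertex of that AC, contradicting cleanness. So the in-arcs and the out-arcs at $v$ lie in two \emph{different} ACs, giving exactly two ACs incident with $v$.

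Then I would set up the incidence sum $\sum_{X \in C} |X^b|$, counting pairs $(v, X)$ with $v \in V_{sat}$ and $v$ a vertex of $X$. By the previous paragraph this equals $2|V_{sat}|$, and by Lemma \ref{lem-closed-saturated} each of the $m$ summands is at least $4$, giving
\[
2|V_{sat}| \;=\; \sum_{X \in C} |X^b| \;\geq\; 4m,
\]
which yields $|V_{sat}| \geq 2m$ and completes the argument.

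The only step requiring any real care is the claim that each saturated vertex sits in exactly two ACs; everything else is immediate once Lemma \ref{lem-closed-saturated} is granted. I would argue this from the alternating-cycle definition: consecutive arcs in an AC either share an end-vertex (two in-arcs at a common vertex) or share a start-vertex (two out-arcs at a common vertex), so the pair of in-arcs at any saturated $v$ is always swept up into one AC and the pair of out-arcs into another, and cleanness forces these two ACs to be distinct.
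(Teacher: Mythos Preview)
Your proposal is correct and matches the paper's own proof essentially line for line: the paper also invokes Lemma~\ref{lem-closed-saturated} to get $\sum_{X\in C}|X^b|\ge 4m$ and then observes that cleanness forces each $v\in V_{sat}$ to lie in exactly two ACs, so the sum equals $2|V_{sat}|$. Your extra paragraph justifying the ``exactly two ACs'' claim from the alternating-cycle definition is a welcome elaboration of what the paper states in one clause.
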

\begin{proof}
  From lemma \ref{lem-closed-saturated} above:
  \[ \sum_{X \in C}|X^b| \geq 4m \]
  Since each of AC of $G$ is clean, each $v \in V_{sat}$ is contained in a unique pair of
  ACs and hence is counted twice in this sum; the result follows.
\end{proof}

For an example of the use of Theorem \ref{thm-reduction}, consider the strongly
connected, 30-vertex 2-dd of Fig. 8 in [\ref{ref-delorme}] (that paper does not deal with
Hamiltonicity); it is redrawn in Figure \ref{fig-v30} below with the vertices labeled,
the arcs of the two ACs in red and blue using solid/dotted lines for the
forward/backward arcs. We can see that both ACs have 30 arcs, are dirty and closed, so
both quotients are empty. Hence the graph is non-Hamiltonian.
\begin{figure}[ht]
  \centering
  % Useful links:
% https://tex.stackexchange.com/questions/57152/how-to-draw-graphs-in-latex
% https://tex.stackexchange.com/questions/115145/how-to-define-the-default-vertical-distance-between-nodes
%
\begin{tikzpicture}[->, node distance={10mm}, thick, main/.style = {circle}]    % 
\tikzstyle{every text node part}=[font=\tiny, inner sep=.3]

% Strongly-connected 30-vertex non-Hamiltonian, vertex-transitive 2-dd

% Vertices
% inner pentagon
\node[main] (a1) at (0:1) [draw] {$1$};
\node[main] (a5) at (72:1) [draw] {$2$};
\node[main] (a4) at (2*72:1) [draw] {$3$};
\node[main] (a3) at (3*72:1) [draw] {$4$};
\node[main] (a2) at (4*72:1) [draw] {$5$};
% triangles on sides of pentagon
\node[main] (a6) at (-36:2) [draw] {$6$};
\node[main] (a7) at (36:2) [draw] {$7$};
\node[main] (a8) at (36+1*72:2) [draw] {$8$};
\node[main] (a9) at (36+2*72:2) [draw] {$9$};
\node[main] (a10) at (36+3*72:2) [draw] {$10$};

% inner decagon
\node[main] (a14) at (18:3)    [draw] {$14$};
\node[main] (a15) at (18+36:3)   [draw] {$15$};
\node[main] (a16) at (18+2*36:3) [draw] {$16$};
\node[main] (a17) at (18+3*36:3) [draw] {$17$};
\node[main] (a18) at (18+4*36:3) [draw] {$18$};
\node[main] (a19) at (18+5*36:3) [draw] {$19$};
\node[main] (a20) at (18+6*36:3) [draw] {$20$};
\node[main] (a11) at (18+7*36:3) [draw] {$11$};
\node[main] (a12) at (18+8*36:3) [draw] {$12$};
\node[main] (a13) at (18+9*36:3) [draw] {$13$};

% outer decagon
\node[main] (a28) at (0:4.5)    [draw] {$28$};
\node[main] (a23) at (36:4.5)   [draw] {$23$};
\node[main] (a29) at (2*36:4.5) [draw] {$29$};
\node[main] (a24) at (3*36:4.5) [draw] {$24$};
\node[main] (a30) at (4*36:4.5) [draw] {$30$};
\node[main] (a25) at (5*36:4.5) [draw] {$25$};
\node[main] (a26) at (6*36:4.5) [draw] {$26$};
\node[main] (a21) at (7*36:4.5) [draw] {$21$};
\node[main] (a27) at (8*36:4.5) [draw] {$27$};
\node[main] (a22) at (9*36:4.5) [draw] {$22$};

% Arcs
% inner pentagon
\draw[red] (a1) -> (a5);
\draw[red] (a5) -> (a4);
\draw[red] (a4) -> (a3);
\draw[red] (a3) -> (a2);
\draw[red] (a2) -> (a1);
% triangles on sides of pentagon
\draw[red,dotted] (a1) -> (a6);
\draw[red,dotted] (a6) -> (a2);
\draw[red,dotted] (a2) -> (a10);
\draw[red,dotted] (a10) -> (a3);
\draw[red,dotted] (a3) -> (a9);
\draw[red,dotted] (a9) -> (a4);
\draw[red,dotted] (a4) -> (a8);
\draw[red,dotted] (a8) -> (a5);
\draw[red,dotted] (a5) -> (a7);
\draw[red,dotted] (a7) -> (a1);

% inner decagon
\draw[blue,dotted] (a11) -> (a20);
\draw[blue,dotted] (a19) -> (a18);
\draw[blue,dotted] (a17) -> (a16);
\draw[blue,dotted] (a15) -> (a14);
\draw[blue,dotted] (a13) -> (a12);
\draw[red,dotted] (a20) -> (a19);
\draw[red,dotted] (a18) -> (a17);
\draw[red,dotted] (a16) -> (a15);
\draw[red,dotted] (a14) -> (a13);
\draw[red,dotted] (a12) -> (a11);
% triangles inside inner decagon
\draw[red] (a12) -> (a6); \draw[red] (a6) -> (a13);
\draw[red] (a14) -> (a7); \draw[red] (a7) -> (a15);
\draw[red] (a16) -> (a8); \draw[red] (a8) -> (a17);
\draw[red] (a18) -> (a9); \draw[red] (a9) -> (a19);
\draw[red] (a20) -> (a10); \draw[red] (a10) -> (a11);

% outer decagon
\draw[blue,dotted] (a28) -> (a23); \draw[blue,dotted] (a23) -> (a29);
\draw[blue,dotted] (a29) -> (a24); \draw[blue,dotted] (a24) -> (a30);
\draw[blue,dotted] (a30) -> (a25); \draw[blue,dotted] (a25) -> (a26);
\draw[blue,dotted] (a26) -> (a21); \draw[blue,dotted] (a21) -> (a27);
\draw[blue,dotted] (a27) -> (a22); \draw[blue,dotted] (a22) -> (a28);

% triangles between decagons
\draw[blue] (a11) -> (a27); \draw[blue] (a27) -> (a12);
\draw[blue] (a13) -> (a28); \draw[blue] (a28) -> (a14);
\draw[blue] (a15) -> (a29); \draw[blue] (a29) -> (a16);
\draw[blue] (a17) -> (a30); \draw[blue] (a30) -> (a18);
\draw[blue] (a19) -> (a26); \draw[blue] (a26) -> (a20);

% outer curvilinear pentagon
\draw[blue] (a21) to [out=180,in=-90,looseness=1.5] (a25);
\draw[blue] (a25) to [out=100,in=180,looseness=1.5] (a24);
\draw[blue] (a24) to [out=30,in=120,looseness=1.5] (a23);
\draw[blue] (a23) to [out=-45,in=45,looseness=1.5] (a22);
\draw[blue] (a22) to [out=-120,in=-30,looseness=1.5] (a21);

\end{tikzpicture}
  \caption{Vertex-transitive, strongly connected, odd, non-Hamiltonian 2-dd}
  \label{fig-v30}
\end{figure}
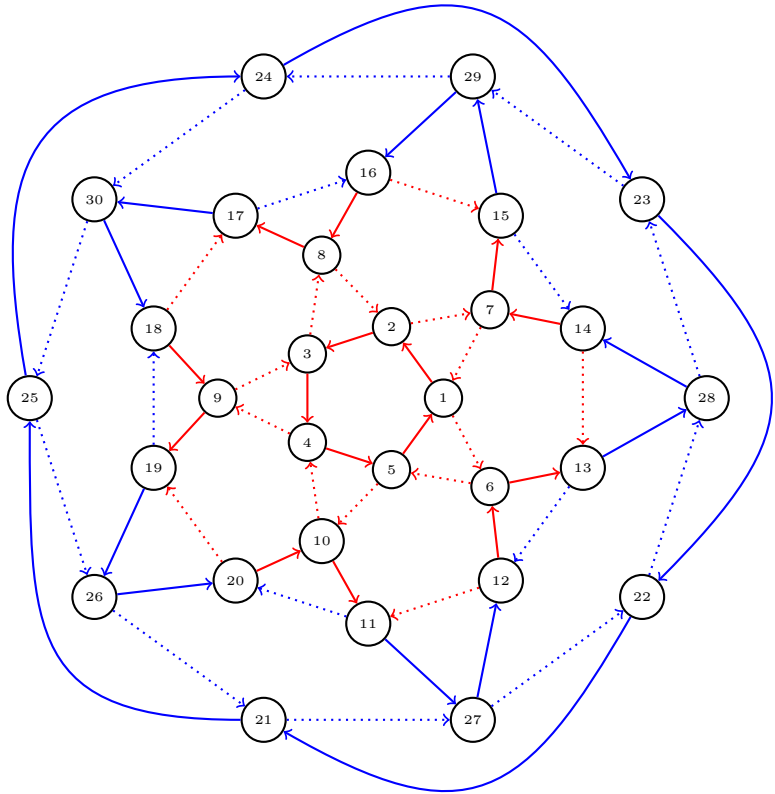

We conclude with a couple of techniques for creating strongly connected non-Hamiltonian
elements of $\fullG$. The first is to take any connected, closed element of $\partialG$
and splice all of its unsaturated vertices with an arbitrary connected element of
$\partialG$ with the same unsaturated vertex count; by Theorem \ref{thm-non-ham}, all
such 2-dds must be non-Hamiltonian. The second is to start with a connected 2-graph
$G_1 \in \partialG$ with a unique open route $r$ and a connected non-Hamiltonian 2-dd
$G_2 \in \fullG$. Split $|r|$ arbitrary vertices of $G_2$ and splice them with the
unsaturated vertices of $G_1$ such that whenever an entry vertex $u$ of $G_1$ is spliced
with $v^{in}$, $r(u)$ is spliced with $v^{out}$. By Theorem
\ref{thm-reduction}\ref{thm-reduction-b}, all such 2-dds must be non-Hamiltonian.

\section{References}
\begin{enumerate}
\item
  {\sl Even, S.} \label{ref-even}
  {\bf Graph Algorithms}, Computer Science Press, Rockville, MD (1979) p 64.
\item
  {\sl Ramanath, M. V. S.} \label{ref-ram}
  {\bf Factors in a class of Regular Digraphs}, J. Graph Theory, vol. 9 (1985)
  p. 161-175.
\item
  {\sl Ramanath, M. V. S.} and {\sl Walsh, T. R.}, \label{ref-ramwalsh}
  {\bf Enumeration and Generation of a Class of Regular Digraphs},
  J. Graph Theory, vol. 11 (1987), p. 471-479.
\item
  {\sl Ramanath, M. V. S.} and {\sl Walsh, T. R.}, \label{ref-ramwalsh-tech}
  {\bf Enumeration and Generation of a Class of Regular Digraphs}, Technical Report
  No. 147 (1986), University of Western Ontario, Canada, p. 28.
\item
  {\sl Tutte, W. T.},  \label{ref-tutte},
  {\bf On Hamiltonian circuits}, Journal of the London Mathematical Society,
  21 (2): 98–101, 1946
\item
  {\sl Meredith, G. H. J.}, \label{ref-meredith},
  {\bf Regular n-valent n-connected nonHamiltonian non-n-edge-colorable graphs},
  Journal of Combinatorial Theory, Series B. 14: 55–60, 1973
\item
  {\sl Trotter, W. T.} and {\sl Erd\"os, P.}, \label{ref-trotter},
  {\bf When the Cartesion product of directed cycles is Hamiltonian},
  J. Graph Theory vol. 2 (1978) 137-142.
\item
  {\sl Charles Delorme}, {\bf Cayley digraphs and graphs}, \label{ref-delorme},
  European Journal of Combinatorics, vol. 34, issue 8 (2013), 1307-1315
\end{enumerate}

\clearpage
\end{flushleft}
\end{document}